\newtheorem*{theorem*}{Theorem}
\newtheorem{theorem}{Theorem}[section]
\newtheorem{lemma}[theorem]{Lemma}
\newtheorem{cor}[theorem]{Corollary}
\theoremstyle{definition} \newtheorem{remark}[theorem]{Remark}
\newtheorem*{defin*}{Definition}
\newtheorem{notation}[theorem]{Notation}
\def\rr{{\mathbb R}}
\def\R{{\mathbb R}}
\def\Q{{\mathbb Q}}
\def\iK{{\mathcal{K}}}
\def\su{\subset}
\def\sp{\supset}
\def\al{\alpha}
\def\ga{\gamma}
\def\de{\delta}
\def\ep{\varepsilon}
\def\ol{\overline}
\def\ti{\widetilde}
\def\diam{{\rm diam}\, }
\def\dim{{\rm dim}\, }
\def\leb{\mathcal{L}}
\def\hau{\mathcal{H}}
\def\lkb{\lesssim}
\def\gkb{\gtrsim}
\def\cd{\cdot}
\providecommand{\semmi}[1]{}
\begin{document}

\title[Hausdorff dimension of unions of affine subspaces]{Hausdorff dimension of unions of affine subspaces and of Furstenberg-type sets}
\author{K. H\'era, T. Keleti, and A. M\'ath\'e}

\address
{Institute of Mathematics, E\"otv\"os Lor\'and University, 
P\'az\-m\'any P\'e\-ter s\'et\'any 1/c, H-1117 Budapest, Hungary}

\email{herakornelia@gmail.com}

\email{tamas.keleti@gmail.com}

\address
{Mathematics Institute, University of Warwick, Coventry, CV4 7AL, UK}

\email{a.mathe@warwick.ac.uk}

\thanks{This research was supported  
by the Hungarian National Research, Development and Innovation Office -- NKFIH, 104178, 
and the first author was also supported by the \'UNKP-16-3 New National Excellence Program of the Ministry of Human Capacities.} 

\begin{abstract}
We prove that for any $1 \leq k<n$ and $s\le 1$, the union of any nonempty 
$s$-Hausdorff dimensional family of $k$-dimensional affine subspaces 
of $\R^n$ has
Hausdorff dimension $k+s$.
More generally, we show that for any $0 < \al \leq k$, if $B \su \rr^n$ and $E$ is a nonempty collection 
of $k$-dimensional affine subspaces of $\R^n$ such that every
$P \in E$ intersects $B$ in a set of Hausdorff dimension at least $\al$, 
then 
$\dim B \ge 2 \al - k + \min(\dim E, 1)$, where $\dim$ denotes 
the Hausdorff dimension. 
As a consequence, we generalize the well-known Furstenberg-type estimate 
that every $\al$-Furstenberg set has Hausdorff dimension at least $2 \al$; we strengthen a theorem of Falconer and Mattila \cite{FaMa};
and we show that for any $0 \leq k<n$, if a set $A \su \rr^n$ contains the $k$-skeleton of a 
rotated unit cube around every point of $\rr^n$, 
or if $A$ contains a $k$-dimensional affine subspace at a fixed positive distance
from every point of $\R^n$, 
then the Hausdorff dimension of $A$ is at least $k + 1$. 
\end{abstract}

\maketitle

\section{Introduction}
There are several problems gathering around the general principle that
an $s$-dimensional collection of $d$-dimensional sets in $\rr^n$ must have positive measure
if $s+d > n$ and Hausdorff dimension $s+d$ if $s+d \leq n$, unless the sets have large
intersections. For example, Wolff \cite{Wo97,Wo00} proved that if a planar 
set $B$ contains
a circle around every point of a Borel set $S\su \R^2$ of Hausdorff dimension 
$s$ then $B$ has positive
Lebesgue measure provided $s >1$, and the Hausdorff dimension of $B$ is
at least $s+1$ when $s\le 1$. Most of these problems are only partially solved. 
The most famous example is the Kakeya conjecture, which states that 
every Besicovitch set (a compact set that contains a unit line segment in 
every direction) in $\R^n$ has Hausdorff dimension $n$, see e.g \cite{Ma15}. 
Note that the directions of lines of 
$\R^n$ form a set of dimension $n-1$, so the line segments of a Besicovitch
set form a collection 
of Hausdorff dimension at least $n-1$, so the above principle
would indeed imply the Kakeya conjecture. 
On the other hand, the following trivial example shows that 
this principle cannot be applied for every $s$-dimensional 
collection of lines: 
for any collection of lines of a fixed plane of $\R^3$  
the union clearly has Hausdorff dimension at most $2$, which is less than
$s+1$ if $s>1$.
In this paper we show that the above principle holds for any $s$-dimensional
collection 
of lines or even $k$-dimensional affine subspaces provided that $s\le 1$.

\begin{theorem}\label{t:simplest}
For any integers $1\le k<n$ and
$s\in [0,1]$ the union of any nonempty $s$-Hausdorff-dimensional family of 
$k$-dimensional affine subspaces
of $\R^n$ has Hausdorff dimension $s+k$.
\end{theorem}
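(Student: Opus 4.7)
The upper bound $\dim\bigcup E\le s+k$ is routine. The affine Grassmannian $A(n,k)$ of $k$-dimensional affine subspaces of $\R^n$ is a smooth manifold of dimension $(n-k)(k+1)$; in natural local coordinates (direction in $G(n,k)$ together with an orthogonal offset) the incidence map $\{(P,v):v\in P\cap\ol{B(0,R)}\}\to\R^n$, $(P,v)\mapsto v$, is locally Lipschitz. Covering $E$ by countably many such charts and exhausting each plane by compact sets, $\bigcup E$ becomes a countable union of Lipschitz images of subsets of $E\times\R^k$, each of Hausdorff dimension at most $s+k$.

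For the lower bound I would use the Frostman energy method. Fix $t<s$ and, by Frostman's lemma on $A(n,k)$, pick a nonzero Radon measure $\mu$ on $E$ with $\mu(B(P,r))\le Cr^t$. For $R$ large, set $\lambda_P=\hau^k\rfloor_{P\cap\ol{B(0,R)}}$ and form the lifted measure $\nu=\int\lambda_P\,d\mu(P)$, a nonzero Radon measure on $\bigcup E$. By the energy characterization of Hausdorff dimension it suffices to show that
\[
I_\sigma(\nu)=\int\!\!\int\!\!\int\!\!\int\frac{d\lambda_P(x)\,d\lambda_{P'}(y)}{|x-y|^\sigma}\,d\mu(P)\,d\mu(P')
\]
is finite for every $\sigma<t+k$, and then let $t\to s$.

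For $\sigma>k$, integrating first in $x\in P$ gives $\int|x-y|^{-\sigma}d\lambda_P(x)\lkb d(y,P)^{k-\sigma}$, and then integrating in $y\in P'$ bounds the inner double integral by (essentially) $\omega(P,P')^{k-\sigma}$, where $\omega(P,P')$ is a transversality parameter built from the sines of the principal angles between the directions of $P$ and $P'$ (vanishing when the planes are parallel). The problem thus reduces to
\[
\int\!\!\int \omega(P,P')^{k-\sigma}\,d\mu(P)\,d\mu(P')<\infty\quad\text{for all }\sigma<t+k.
\]
This is the main obstacle: the event $\omega(P,P')\le\delta$ does \emph{not} force closeness on $A(n,k)$, since two nearly parallel planes can be far apart, so the Frostman bound on $\mu$ cannot be applied directly to it. To overcome this I would disintegrate $\mu$ over the direction map $A(n,k)\to G(n,k)$, treat (near-)parallel pairs separately via the translation structure on each direction fibre, and for transversal pairs reduce matters to a scalar kernel $\int\!\!\int|\theta-\theta'|^{k-\sigma}d\tilde\mu(\theta)d\tilde\mu(\theta')$ against a Frostman-type measure of exponent $\min(t,1)$ on an angle variable. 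The hypothesis $s\le 1$ is exactly what makes this final one-dimensional integral converge — consistent with the theorem's failure for $s>1$ exhibited by taking all lines inside a fixed $2$-plane of $\R^3$.
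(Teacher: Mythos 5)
Your overall strategy --- a Frostman energy estimate for the lifted measure $\nu=\int\lambda_P\,d\mu(P)$ with $\lambda_P=\hau^k$ restricted to $P\cap\ol{B(0,R)}$ --- is a genuinely different route from the paper's. The paper instead works at a single pigeonholed scale $\delta$ with $\delta$-mollified tubes and a C\'ordoba-type Cauchy--Schwarz argument (Lemma~\ref{Ade}, Section~\ref{ll2}); the key geometric input is the tube-intersection bound $\leb^n(P_\de\cap P'_\de\cap S)\lesssim \de^{n-k+1}/(\rho(P,P')+\de)$ of Lemma~\ref{gengeo}. For Theorem~\ref{t:simplest}, where $B=\bigcup E$ and $\lambda_P$ can be taken to be Hausdorff measure itself, your energy method can in principle be made to work and is shorter; the paper's single-scale machinery is built for the more general Theorem~\ref{thm1}, where $\dim(B\cap P)=\al$ may be less than $k$, and where the inner double energy integral against abstract Frostman measures on $B\cap P$ is no longer tractable in closed form.

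There is, however, a concrete gap in the step you treat as routine, and you misplace where $s\le1$ is actually used. First, as you yourself note, the quantity $\omega(P,P')$ built from principal angles vanishes for parallel subspaces, so $\omega^{k-\sigma}$ gives no control; the right quantity is the full natural metric $\rho(P,P')$, which controls both angle and offset once every $P\in E$ is forced through a fixed ball --- this is exactly the role of the paper's Lemma~\ref{gengeo}, and once you write the inner bound with $\rho$ the elaborate disintegration over $G(n,k)$ becomes unnecessary. Second, and more seriously, even with $\rho$ the inner estimate $\iint|x-y|^{-\sigma}\,d\lambda_P\,d\lambda_{P'}\lesssim_R \rho(P,P')^{k-\sigma}$ only holds for $k<\sigma<k+1$: your own intermediate bound $\int|x-y|^{-\sigma}d\lambda_P(x)\lesssim d(y,P)^{k-\sigma}$, integrated in $y\in P'$, diverges as soon as $\sigma\ge k+1$ whenever $P\cap P'$ is $(k-1)$-dimensional (e.g.\ two $k$-planes inside a common $(k+1)$-plane, which is precisely the extremal configuration behind the sharpness of the theorem and its failure for $s>1$). \emph{This} is where $s\le1$ enters: it guarantees $\sigma<s+k\le k+1$, so the inner integral converges. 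Your placement of $s\le1$ in a final one-dimensional angular kernel is not where the constraint bites, and the claim that the angular pushforward of $\mu$ is Frostman of exponent $\min(t,1)$ is unjustified --- the direction map $A(n,k)\to G(n,k)$ has $(n-k)$-dimensional fibres, so a $t$-Frostman measure on a family of parallel subspaces pushes forward to a point mass. The correct and self-contained completion is: prove $\iint|x-y|^{-\sigma}\,d\lambda_P\,d\lambda_{P'}\lesssim_R\rho(P,P')^{k-\sigma}$ for $k<\sigma<k+1$; then $I_\sigma(\nu)\lesssim\iint\rho(P,P')^{k-\sigma}\,d\mu(P)\,d\mu(P')<\infty$ for all $\sigma<t+k$ by the Frostman bound on $\mu$ alone, and letting $t\uparrow s\le1$ gives $\dim\bigcup E\ge s+k$.
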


For the special case $k=n-1$ this was proved by Oberlin \cite{Ob} for compact (or analytic) families of hyperplanes.  
He also proved \cite{Ob2} that for any integers $1\le k<n$ and any $s \geq 0$, the union of any nonempty compact (or analytic) $s$-Hausdorff-dimensional family of 
$k$-dimensional affine subspaces of $\R^n$ has Hausdorff dimension at least $\min\{n, 2k-k(n-k)+s\}$. 
Moreover, he proved that $s >(k+1)(n-k)-k$ implies positive Lebesgue measure for such unions, and the bound for $s$ is sharp. 
His results are in harmony with the above heuristic principle in the case of hyperplanes. 

 Falconer and Mattila \cite{FaMa} proved a stronger statement
 both in the $s\le 1$ and $s>1$ cases for hyperplanes: 
instead of full $n-1$-dimensional
affine subspaces it is enough to take a positive measure subset of each of them. 

We can go even further for any $k<n$: it is enough to take a $k$-Hausdorff
dimensional subset of each $k$-dimensional subspace:

\begin{theorem}
\label{t:FalcMatt}
Let $1 \leq k <n$ be integers and $s\in[0,1]$. 
If $E$ is a nonempty $s$-Hausdorff dimensional family of $k$-dimensional affine
subspaces and $B$ is a subset of $\bigcup_{P \in E} P$ such that $B\cap P$
has Hausdorff dimension $k$ for every $P\in E$ then
\begin{equation}\label{firstone}
 \dim  B= \dim \left( \bigcup_{P \in E} P \right) = s+k,
 \end{equation}
where here and in the sequel $\dim$ denotes Hausdorff dimension.
\end{theorem}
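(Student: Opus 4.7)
My plan is as follows. The upper bound $\dim B \le s+k$ is immediate, since $B \su \bigcup_{P \in E} P$ and Theorem~\ref{t:simplest} already gives $\dim \bigcup_{P\in E} P = s+k$. The whole content is therefore the lower bound $\dim B \ge s+k$, which I propose to establish by producing, for every $\ep > 0$, a nonzero finite Borel measure $\mu$ supported on $B$ satisfying $\mu(B(y,r)) \lesssim r^{s+k-C\ep}$ for all $y \in \R^n$ and $r > 0$; by Frostman's lemma this gives $\dim B \ge s+k - C\ep$, and letting $\ep \to 0$ finishes the proof.

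The measure $\mu$ is built from two pieces. First, apply Frostman's lemma to $E \su A(n,k)$ in the affine Grassmannian to obtain a nonzero finite measure $\sigma$ with $\sigma(B_{A(n,k)}(P_0,r)) \lesssim r^{s-\ep}$. Second, for each $P \in E$ the hypothesis $\dim(B\cap P) = k$ together with Frostman gives a unit measure $\nu_P$ on a compact subset of $B \cap P$ with $\nu_P(B(x,r)) \le C_P\, r^{k-\ep}$. The constant $C_P$ is a priori plane-dependent, but a standard Egorov-style argument lets one pass to a Borel subset $E' \su E$ on which $C_P \le C$ uniformly and on which $P \mapsto \nu_P$ is weakly Borel, while retaining $\dim E' \ge s - \ep$ (so $\sigma$ may be restricted to $E'$). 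One then defines
\[
\mu = \int_{E'} \nu_P \, d\sigma(P),
\]
a nonzero Borel measure supported on $B$.

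The ball estimate for $\mu$ is then straightforward. For any $y \in \R^n$ and $r > 0$,
\[
\mu(B(y,r)) = \int_{E'} \nu_P\bigl(B(y,r)\cap P\bigr)\, d\sigma(P),
\]
and the integrand vanishes unless $\dist(y,P) \le r$, in which case $B(y,r)\cap P$ is a $k$-dimensional ball inside $P$ of radius at most $r$, so $\nu_P(B(y,r)\cap P) \le C r^{k-\ep}$. Consequently
\[
\mu(B(y,r)) \lesssim r^{k-\ep}\, \sigma\bigl(\{P \in E' : \dist(y,P) \le r\}\bigr),
\]
and the proof is reduced to the \emph{tube estimate}
\[
\sigma\bigl(\{P \in E' : \dist(y,P) \le r\}\bigr) \lesssim r^{s-\ep} \qquad \text{for all } y \in \R^n \text{ and } r>0.
\]

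The tube estimate is the real obstacle, and also where the hypothesis $s \le 1$ must enter in an essential way. It cannot be obtained from the Frostman property of $\sigma$ alone: the set of $k$-planes passing within $r$ of a fixed $y$ is not a ball in $A(n,k)$ but a ``tube'' of full direction-dimension $k(n-k)$ and transversal thickness only $r$, so covering it by Grassmannian balls of radius $r$ loses a factor of $r^{-k(n-k)}$. The way forward, I expect, is to exploit $s \le 1$ by isolating a one-dimensional transversal parameter---for instance, decomposing $E'$ into pieces on which the direction is essentially constant and then applying a genuinely one-dimensional Frostman estimate in the translation parameter perpendicular to $y$. Done correctly, this is in spirit the same computation that drives the proof of Theorem~\ref{t:simplest}, and I would expect Theorem~\ref{t:FalcMatt} to follow by re-running that argument with the slice measure $\nu_P$ in place of $\hau^k$ restricted to $P\cap B(0,R)$. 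A secondary technical worry is making the Borel-selection step for $\nu_P$ genuinely clean; the primary obstacle is the tube estimate.
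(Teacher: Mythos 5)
The proposal has a genuine gap at its central step. The proposed tube estimate
$\sigma\bigl(\{P \in E' : \dist(y,P) \le r\}\bigr) \lesssim r^{s-\ep}$
is false for some admissible families $E$. Take $n=2$, $k=1$, and let $E$ be a one-dimensional set of lines all passing through a single point $y_0$, so $s = \dim E = 1$. For $y = y_0$ the left-hand side equals $\sigma(E') > 0$ for \emph{every} $r>0$ and does not tend to $0$. The resulting measure $\mu = \int_{E'} \nu_P\, d\sigma(P)$ then satisfies only $\mu(B(y_0,r)) \lesssim r^{k-\ep}$, and Frostman's lemma yields only $\dim B \ge k-\ep$, not $s+k-\ep$. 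You rightly flag the tube estimate as ``the real obstacle,'' but the suggested fix (isolating a one-dimensional transversal parameter) cannot repair it: the failure is caused by genuine concentration of the family at a point, not by a lossy parametrization. This is exactly why the naive ``integrate a slice measure over a Frostman measure on the family'' template breaks down for Furstenberg/Kakeya-type lower bounds, and why one is forced to pass to $L^2$ methods.

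The paper avoids the tube estimate altogether. Theorem~\ref{t:FalcMatt} is deduced from the $\al=k$ case of Theorem~\ref{thm1} (for the lower bound) together with Lemma~\ref{l:easy} (for the upper bound). Theorem~\ref{thm1} is proved by a C\'ordoba-style $L^2$ argument: one pigeonholes a cover of $B$ to a single dyadic scale $\de$, extracts a subset $F$ covered at that scale on which many $P\in E$ retain large $\hau^\al_\infty$-content of $P\cap B$, and bounds $\leb^n(F_\de)$ from below via Cauchy--Schwarz, reducing the problem to an upper bound on $\iint \leb^n(\ti{P}_{\de} \cap \ti{P'}_{\de})\, d\mu\, d\mu$. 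That double integral is controlled by the purely geometric Lemma~\ref{gengeo}, $\leb^n(P_\de \cap P'_\de \cap S) \lesssim \de^{n-k+1}/(\rho(P,P')+\de)$, combined with the Frostman property of $\mu$ on $E$; it is only here that $s\le 1$ enters, to make a geometric series converge. Because one averages over \emph{pairs} $(P,P')$, a pencil of subspaces through a common point contributes negligibly, so no pointwise tube estimate is required. That is the key idea your sketch is missing. (Two smaller points: citing Theorem~\ref{t:simplest} for the upper bound is circular --- the non-circular ingredient is Lemma~\ref{l:easy} --- and the ``Egorov-style'' Borel-selection step is genuinely delicate; the paper spends Lemma~\ref{use} and Lemma~\ref{ass} on it, using Choquet capacitability and Howroyd's theorem.)
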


Note that this theorem does not assume any kind of measurability of $E$ (or $B$), unlike the mentioned results of Oberlin and Falconer and Mattila.

As we explained above, the equality on the right hand side of \eqref{firstone} is not true 
without the restriction $s\le 1$. 
But it is possible that the equality on the left hand side always holds. 
If it holds for $k=1$ in $\rr^n$ for all $n \geq 2$, 
it would imply that Besicovitch sets in $\rr^n$ have Hausdorff dimension at least $n-1$ and upper Minkowski dimension $n$, see \cite{Ke}.

In Theorem~\ref{t:FalcMatt}, 
$\dim  B \le \dim \left( \bigcup_{P \in E} P \right)$ is obvious,
$\dim\left( \bigcup_{P \in E} P \right) \le s+k$ is easy (Lemma~\ref{l:easy}),
the essence of the result is the estimate $\dim B\ge s+k$.

\medskip

It is natural to ask what happens if we go further 
and take the union of $\al$-Hausdorff dimensional subsets of 
an $s$-dimensional family of $k$-dimensional affine subspaces of $\R^n$ 
for some $s\le 1$ and $\al\in(0,k]$. Our most general result 
(Theorem~\ref{thm1}) gives
that in this case the union has Hausdorff dimension at least $2\al-k+s$.

Note that this result implies a known Furstenberg-type estimate. 
Let $0 < \al \leq 1$ and suppose that $F \su \rr^2$ is a Furstenberg set: 
a compact set such that for
every $e \in S^1$ there is a line $L_e$ in direction $e$ for which 
$\dim L_e \cap F \geq \al$, see e.g.~\cite{Ma15}. 
Since the sets $L_e$ form an at least $1$-dimensional collection of lines of $\R^2$
our above mentioned result gives $\dim F \geq 2 \al - 1 + 1= 2 \al$.

Molter and Rela \cite{MR} proved that if $E\su S^1$
has Hausdorff dimension $s$,  
$F\su\R^2$ and for every $e\in S^1$ there is a line 
$L_e$ in direction $e$ for which 
$\dim L_e \cap F \geq \al$ then $\dim F\ge 2\al-1+s$ and 
$\dim F\ge \al+\frac{s}2$. So our result is also a generalization of the
first estimate of Molter and Rela.

Our original motivation comes from the following question: What is the
minimal Hausdorff dimension of a set in $\R^n$ that contains the $k$-skeleton
of a rotated unit cube centered at every point of $\R^n$? In \cite{CCHK}
it is proved that for every $0\le k <n$ there exist such sets of
Hausdorff dimension at most $k+1$. 
As a fairly quick application of the above results we show 
(Theorem~\ref{fact2}) that for every $0\le k <n$ such a set must have
Hausdorff dimension at least $k+1$, so $k+1$ is the minimal Hausdorff 
dimension. 
We remark that if we have $k$-skeletons 
of \emph{rotated and scaled} cubes centered at every point 
then the minimal Hausdorff dimension is $k$, see \cite{CCHK}, and 
if we allow only \emph{scaled axis-parallel} cubes then the minimal 
Hausdorff dimension is $n-1$, see \cite{KNS} for $k=1, n=2$ and \cite{Th}
for the general case.
We also show (Theorem~\ref{fact2}) that 
if $A$ contains a $k$-dimensional affine subspace at a fixed positive distance
from every point of $\R^n$, 
then the Hausdorff dimension of $A$ is at least $k + 1$.

The paper is organized as follows: In Section \ref{mainn} we state 
our most general result (Theorem~\ref{thm1}), and prove its corollaries. 
In Section \ref{bas} we prove Theorem~\ref{thm1} subject to a lemma 
(Lemma \ref{Ade}), which will be proved in Sections \ref{codd} and \ref{ll2}.
In Section \ref{codd} we prove a purely geometrical lemma, 
which will be used during the $\mathit{L}^2$ estimation procedure 
in Section \ref{ll2} to prove Lemma~\ref{Ade}. 

\begin{notation}
For any integers $1 \leq  k < n$, let $A(n, k)$ denote the space
of all $k$-dimensional affine subspaces of $\rr^n$. 
For any $s \geq  0$, $\de \in (0,\infty]$ and $A \su \rr^n$, the $s$-dimensional Hausdorff $\de$-premeasure of $A$ will be denoted by $\hau^s_{\de}(A)$,  
the $s$-dimensional Hausdorff measure by $\hau^s(A)$, and 
the Hausdorff dimension of $A$ by $\dim A$. 
The open ball of center $x$ and radius $r$ will be denoted by $B(x,r)$ or $B_{\rho}(x,r)$ if we want to indicate the metric $\rho$. 
For a set $U \su \rr^n$, $U_{\de}=\cup_{x \in U} B(x,\de)$ denotes the open $\de$-neighborhood of $U$. 
We will use the notation $a \lesssim_{\alpha} b$ if $a \leq Cb$ where $C$ is a constant depending on $\alpha$. 
If it is clear from the context what $C$ should depend on, we may write only $a \lesssim b$. 
\end{notation}

\section{The most general theorem and its corollaries} 

Our most general result is the following: 
\label{mainn}
\begin{theorem}
\label{thm1}
Let $1 \leq  k < n$ be integers, let $A(n, k)$ denote the space
of all $k$-dimensional affine subspaces of $\rr^n$ and consider any natural metric on
$A(n, k)$. Let $0 < \al \leq k$ be any real number.
Suppose that $B \su \rr^n, \emptyset \neq E \su A(n, k)$ and for every $k$-dimensional affine subspace
$P \in E$, $\dim (P \cap B) \geq \al$. Then 
\begin{equation}
\label{geq}
\dim B \geq 2\al-k + \min(\dim E, 1). 
\end{equation}
\end{theorem}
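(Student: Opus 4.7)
The plan is an energy (Frostman-measure) approach along the lines of the proof that $\al$-Furstenberg sets in $\R^2$ have dimension at least $2\al$. First reduce to $\dim E > 0$: when $\dim E = 0$ the inequality reduces to $\dim B \ge 2\al - k$, which is immediate from $\dim B \ge \dim(P\cap B)\ge \al$ together with $\al\le k$. Fix $s$ slightly less than $\min(\dim E, 1)$ and $\al'$ slightly less than $\al$, so that it suffices to show $\dim B \ge 2\al' - k + s$.

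By a suitable Frostman-type lemma, pick a Borel probability measure $\mu$ on $A(n,k)$ supported in $E$ with $\mu(B_{\rho}(P, r)) \le C r^s$. After restricting to a full-$\mu$-measure subset we may further assume that each $P \in \mathrm{supp}\,\mu$ carries a Frostman probability measure $\sigma_P$ on $P \cap B$ with $\sigma_P(B(x,r)) \le C r^{\al'}$, depending measurably on $P$. Form the measure $\nu = \int \sigma_P\, d\mu(P)$ on $B$; it suffices to show the $t$-energy $I_t(\nu)$ is finite for $t = 2\al' - k + s - \ep$, since this forces $\dim B \ge t$.

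By Fubini,
\[
I_t(\nu) = \int\!\!\int \left( \int\!\!\int |x-y|^{-t}\, d\sigma_P(x)\, d\sigma_{P'}(y) \right) d\mu(P)\, d\mu(P').
\]
A routine estimate using only the Frostman exponent $\al'$ of $\sigma_P$ bounds the inner integral by a constant times $\int \dist(y, P)^{\al' - t}\, d\sigma_{P'}(y)$, reducing the problem to controlling $\int \dist(y, P)^{k - \al' - s + \ep}\, d\mu(P)$ for $y$ in the support of $\sigma_{P'}$. This is essentially the content of Lemma~\ref{Ade}, to be proved in Section~\ref{ll2} by an $L^2$ argument based on the geometric result of Section~\ref{codd}. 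The geometric lemma estimates the Lebesgue measure of $P_{\de} \cap P'_{\de}$ inside a fixed ball for two $k$-planes with $\rho(P,P') \approx r$; combined with the Frostman decay of $\mu$, this yields a dyadic geometric series that converges precisely when $t < 2\al' - k + s$.

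The main obstacle is the purely geometric lemma, because two $k$-planes in $\R^n$ behave very differently depending on whether $2k\ge n$ (when they generically meet in a $(2k-n)$-plane) or $2k<n$ (when they are generically skew), and the ``natural metric'' on $A(n,k)$ must weigh angles and translations in a way that gives a clean estimate uniformly across configurations of $(P, P')$. Feeding this geometric bound into the $L^2$ machinery to obtain Lemma~\ref{Ade}, and dealing with the usual measurability issues in the construction of $\sigma_P$, is the technical heart of the argument. Once Lemma~\ref{Ade} is in hand, letting $\ep \searrow 0$, $\al' \nearrow \al$ and $s \nearrow \min(\dim E, 1)$ delivers $\dim B \ge 2\al - k + \min(\dim E, 1)$.
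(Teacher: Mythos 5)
Your proposal takes a genuinely different route from the paper. The paper works with $\delta$-neighbourhoods and an $L^2$ (Cauchy--Schwarz) argument at a single pigeonholed dyadic scale: it finds a scale $\delta = 2^{-\rc}$ and a subset $\ti E$ with $\mu(\ti E)\gtrsim \rc^{-2}$ such that $\hau^\al_\infty(P\cap B_\rc)\gtrsim \rc^{-2}$ for $P\in\ti E$, bounds $\leb^n(F_\delta)$ from below (Lemma~\ref{Ade}) via $\int \leb^n(\ti P_\delta)\,d\mu \le \leb^n(F_\delta)^{1/2}\bigl(\iint \leb^n(\ti P_\delta\cap\ti P'_\delta)\bigr)^{1/2}$ combined with the geometric estimate $\leb^n(P_\delta\cap P'_\delta\cap S)\lesssim \delta^{n-k+1}/(\rho(P,P')+\delta)$ (Lemma~\ref{gengeo}), and converts this into a lower bound on Hausdorff content. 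You instead propose the classical energy-integral route: push forward Frostman measures $\sigma_P$ along $\mu$ to get $\nu$ on $B$ and show $I_t(\nu)<\infty$. Morally the two are dual (energy integral vs.\ Cauchy--Schwarz at a fixed scale), and both hinge on the same geometric input about overlaps of nearby $k$-planes. The paper's pigeonholing buys a clean, single-scale setup and sidesteps the need to vary $\sigma_P$ measurably in $P$; the energy route avoids the dyadic selection but pays elsewhere.

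Two points in your sketch need real care. First, the construction of a \emph{measurable} family $P\mapsto\sigma_P$ of $\al'$-Frostman measures on $P\cap B$ is a genuine descriptive-set-theoretic step, not a ``restriction to a full-measure set''; the paper deliberately arranges things (Lemma~\ref{use}, Howroyd's theorem, pigeonholing on $\hau^\al_\infty$) so that no such measurable selection is required. If you go this way you would need a selection theorem, or a preliminary reduction to a compact $B$ plus a direct discretized construction. Second, the reduction to ``controlling $\int \dist(y,P)^{k-\al'-s+\ep}\,d\mu(P)$ for $y\in\operatorname{supp}\sigma_{P'}$'' is in the wrong Fubini order and is false pointwise: for a fixed $y$ this $\mu$-integral can diverge (e.g.\ $\mu$ could concentrate on a one-parameter family of planes all passing near $y$, since $\{P : \dist(y,P)\le r\}$ is a codimension-$(n-k)$ ``tube'' in $A(n,k)$, not an $r$-ball, and Frostman decay of exponent $s\le 1$ does not control its $\mu$-measure). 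What must be estimated is, for fixed $P,P'$, the quantity $\int_{P'}\dist(y,P)^{\al'-t}\,d\sigma_{P'}(y)$, and only then should one integrate in $P$ against $\mu$; the heart of the geometric lemma is precisely a bound of the form $\int_{P'\cap S}\dist(y,P)^{-\beta}\,d\sigma_{P'}(y)\lesssim \rho(P,P')^{-\beta}$ for $\beta<\al'$, which then lets the $s$-Frostman decay of $\mu$ close the dyadic sum, using $s\le 1$. You correctly flag the geometry as the technical heart, but as written, your Fubini reduction would not go through; the joint structure of the three integrals has to be used.
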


\begin{remark}
An example for such a metric on $A(n,k)$ is defined in \cite{Ma}, p.~53. 
Let $\rho$ denote the given metric on $A(n,k)$. We say that $\rho$ is a natural metric if 
$\rho$ and the metric $d$ defined in \cite{Ma} are strongly equivalent; that is, 
there exist positive constants $K_1$ and $K_2$ such that, for every $P,P' \in A(n,k)$,
$K_1 \cdot d(P,P')\leq \rho (P,P') \leq K_2 \cdot d(P,P').$
\end{remark}

\begin{remark}
\label{rem}
For $\al=k$ and $\dim E\le k+1$ the estimate \eqref{geq} is sharp in the sense that for any $s\in[0,k+1]$ there exist sets $E$ and $B$ with the above property and $\dim E=s$
such that we have equality in \eqref{geq}: 
it is easy to see using
Theorem~\ref{t:FalcMatt} that we obtain such an example by letting $E$ to be
any $s$-Hausdorff dimensional
collection of $k$-dimensional affine subspaces of a fixed
$k+1$-dimensional subspace of $\R^n$ and $B=\cup_{P\in E} P$.

Clearly, \eqref{geq} can be a good estimate only when $\al$ is close to $k$:
for $\al<k-1$ the right-hand side of \eqref{geq} is less than $\al$ 
but trivially, $\dim B\ge \al$.
Since finding the best estimate for the $n=2, k=1, \dim E=1, \al<1$ case is 
essentially equivalent to finding the minimal Hausdorff dimension of 
a Furstenberg set, this cannot be easy and it is unlikely that
our estimate is sharp for any $\al<k$.
\end{remark}

By combining the $\al=k$ case of Theorem~\ref{thm1} and the following lemma,
we obtain Theorem~\ref{t:FalcMatt}, and its special case 
Theorem~\ref{t:simplest}.

\begin{lemma}\label{l:easy}
\label{leq}
For any $1 \leq  k < n$ integers and 
$\emptyset\neq E\su A(n,k)$ we have
$$
\dim \left(\bigcup_{P \in E} P \right)\leq k + \dim E.
$$
\end{lemma}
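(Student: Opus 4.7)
The plan is the standard covering argument: reduce to a bounded region, transport a near-optimal cover of $E$ in the natural metric to a cover of $\bigcup_{P \in E} P$ in $\rr^n$, and use countable stability of Hausdorff dimension. Let $s = \dim E$. It suffices to show $\dim(\bigcup_{P \in E} P) \le k+t$ for every $t > s$, and in fact, since $\bigcup_{P \in E} P = \bigcup_{R \in \mathbb N} \bigcup_{P \in E} (P \cap B(0,R))$ is a countable union, it is enough to bound $\dim \bigcl(\bigcup_{P \in E} P \cap B(0,R)\bigr) \le k+t$ for each fixed $R>0$.

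Fix $R>0$ and $t>s$. For any $\ep>0$, since $\hau^t_\infty(E) = 0$ when we choose $t > \dim E$ (after taking $\ep$ small enough in the definition), we may cover $E$ by countably many $\rho$-balls $B_\rho(P_i, r_i)$ with $\sum_i r_i^{\,t} < \ep$ and $r_i$ arbitrarily small. The first and main geometric input is that the natural metric $\rho$ on $A(n,k)$ controls the Hausdorff distance of the intersections with a fixed ball: there is a constant $C=C(n,k,R)$ such that whenever $\rho(P,P') \le r$, the set $P' \cap B(0,R)$ is contained in the $Cr$-neighbourhood of $P \cap B(0,R)$. This follows from the strong equivalence of $\rho$ with Mattila's metric $d$ and the explicit definition of $d$ in terms of orthogonal projections and nearest points to the origin.

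Using this, fix $i$ and consider any $P \in E$ with $\rho(P,P_i) < r_i$. Then $P \cap B(0,R)$ lies in the $C r_i$-neighbourhood of the $k$-dimensional disk $P_i \cap B(0,R)$, which is a subset of an affine $k$-plane of diameter at most $2R$. Such a neighbourhood can be covered by at most $C'(R/r_i)^k$ Euclidean balls of radius $C'' r_i$, with constants depending only on $n,k,R$. Taking the union over $i$, we obtain a cover of $\bigcup_{P \in E} (P \cap B(0,R))$ by balls of radii $\lesssim_R r_i$, and
\begin{equation*}
\hau^{k+t}_{\de}\!\left(\bigcup_{P \in E} P \cap B(0,R)\right) \;\lesssim_R\; \sum_i r_i^{\,-k}\cdot r_i^{\,k+t} \;=\; \sum_i r_i^{\,t} \;<\; \ep,
\end{equation*}
where $\de$ can be made arbitrarily small by shrinking the original cover. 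Letting $\ep\to 0$ gives $\hau^{k+t}(\bigcup_{P \in E} P \cap B(0,R)) = 0$, hence $\dim \bigl(\bigcup_{P \in E} P \cap B(0,R)\bigr) \le k+t$. Taking $R\to\infty$ and $t\to s^+$ finishes the proof.

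The only real obstacle is the geometric comparison lemma in the second paragraph: one has to check carefully that a $\rho$-ball of radius $r$ in $A(n,k)$ sweeps out, within $B(0,R)$, only a tube of width $\lesssim_R r$ around the centre plane. Once this is granted, the remainder is a routine Hausdorff-measure covering computation, and the $k$ factor in $k+\dim E$ appears exactly because a bounded piece of a $k$-plane needs $\sim r^{-k}$ balls of radius $r$ to cover.
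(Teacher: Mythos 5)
Your proof is correct (modulo a small imprecision noted below) and takes a genuinely different route from the paper. The paper avoids covering arguments altogether: after a finite decomposition of $E$, it fixes a single $P_0 \in A(n,k)$ whose orthogonal projection onto every $P \in E$ is a bijection, defines $h\colon E\times P_0 \to \R^n$ by sending $(P,t)$ to the orthogonal projection of $t$ onto $P$, checks that $h$ is locally Lipschitz with image $\bigcup_{P\in E} P$, and concludes from $\dim\bigl(\bigcup_{P\in E}P\bigr)=\dim h(E\times P_0)\le\dim(E\times P_0)\le \dim E + k$. That argument is shorter and offloads the work to the product estimate for Hausdorff dimension (applied to $E\times P_0$), whereas yours is self-contained and makes it visible where the additive $k$ comes from (a bounded $k$-disk needs $\sim r^{-k}$ balls of radius $r$). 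The one imprecision is in your geometric comparison lemma: $P'\cap B(0,R)$ is \emph{not} in general contained in the $Cr$-neighbourhood of $P\cap B(0,R)$ --- take $P$ tangent to, or just missing, $B(0,R)$ while $P'$ cuts through it. What the strong equivalence of $\rho$ with Mattila's metric actually yields is $P'\cap B(0,R)\subset P_{Cr}$, hence that $P'\cap B(0,R)$ lies in the $Cr$-neighbourhood of $P\cap B(0,2R)$; since this is still a $k$-disk of diameter $\lesssim R$, your ball count $\lesssim_R r^{-k}$ and the subsequent Hausdorff-premeasure computation go through unchanged.
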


\begin{proof}
By taking a finite decomposition of $E$ if necessary,
we can assume that there exists a $P_0\in A(n,k)$
such that the orthogonal projection of $P_0$ onto any
$P\in E$ is $P$. Fix such a $P_0$. 
For any $P\in E$ and $t\in P_0$ let $h(P,t)$ be the 
orthogonal projection of $t$ onto $P$. 
Then $h(\{P\}\times P_0)=P$ for any $P\in E$, so
$h(E\times P_0)=\bigcup_{P \in E} P$. 
It is not hard to check that $h:E\times P_0\to\R^n$
is locally Lipschitz, therefore we obtain
$$
\dim\left(\bigcup_{P \in E} P\right) =
\dim(h(E\times P_0)) \le
\dim(E\times P_0) = \dim E + k.
$$
\end{proof}

Now we show a simple direct application of Theorem \ref{thm1}. 

\begin{cor}
\label{fact1}
Let $0 \leq k < n$ be integers, $0 \leq \al \leq k$, $\emptyset \neq C \su \rr^n$, and $B \su \rr^n$ 
such that for every $x \in C$ there exists a $k$-dimensional affine subspace $P$ containing $x$ 
such that $P$ intersects $B$ in a nonempty set of Hausdorff dimension at least $\al$. 
Then $\dim B \geq 2\al-k + \min(\dim C - k, 1)$.

Specially, if $1 \leq k$ and a set $A \su \rr^n$ contains a $k$-dimensional punctured affine subspace through every point of a set $C$ with $\dim C \geq k+1$, then
$\dim A \geq k+1$. 
\end{cor}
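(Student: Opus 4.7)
The plan is to reduce the first statement to Theorem~\ref{thm1} by producing a family $E \su A(n,k)$ of subspaces each meeting $B$ in dimension at least $\al$, with $\dim E \ge \dim C - k$. Concretely, for each $x \in C$ I would invoke the axiom of choice to pick a $k$-dimensional affine subspace $P_x \ni x$ with $\dim(P_x \cap B) \ge \al$, and set $E = \{P_x : x \in C\}$. A crucial point is that Theorem~\ref{thm1} makes no measurability assumption on $E$, so nothing has to be checked about this selection.

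Assuming $k \ge 1$, the inclusion $C \su \bigcup_{P \in E} P$ combined with Lemma~\ref{l:easy} immediately gives $\dim E \ge \dim C - k$, and Theorem~\ref{thm1} then yields
\[
\dim B \ge 2\al - k + \min(\dim E, 1) \ge 2\al - k + \min(\dim C - k, 1),
\]
which is the desired bound. The boundary case $k = 0$ has to be handled separately, since Theorem~\ref{thm1} requires $k \ge 1$, but it is immediate: then $\al = 0$ and each $P$ is a singleton, so the hypothesis ``$P \cap B$ nonempty of dimension $\ge 0$'' is just $x \in B$; hence $C \su B$ and $\dim B \ge \dim C \ge \min(\dim C, 1)$.

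For the second (``specially'') clause, I would reduce to the first part by taking $\al = k$. If $A$ contains the punctured $k$-dimensional affine subspace $P \se \{x\}$ with $k \ge 1$, then $\dim(P \cap A) \ge \dim(P \se \{x\}) = k$, as removing a single point does not lower Hausdorff dimension of a $k$-dimensional set. Applying the first part with $B = A$, $\al = k$, and using $\dim C \ge k+1$ gives
\[
\dim A \ge 2k - k + \min(\dim C - k, 1) = k + 1.
\]

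There is no genuine obstacle: the whole argument is a short direct consequence of Theorem~\ref{thm1} and Lemma~\ref{l:easy}. The only conceptual observation worth highlighting is that the lack of measurability hypotheses in Theorem~\ref{thm1} is exactly what allows the pointwise selection $x \mapsto P_x$ to work without any regularity check on the resulting family $E$.
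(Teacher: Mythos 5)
Your proof is correct and follows essentially the same route as the paper's: build a family $E$ with $C \su \bigcup_{P\in E}P$, use Lemma~\ref{l:easy} to get $\dim E \ge \dim C - k$, then apply Theorem~\ref{thm1}. The paper simply takes $E$ to be the set of \emph{all} $k$-planes meeting $B$ in a set of dimension at least $\al$, avoiding the axiom of choice, but this changes nothing. One small point you glossed over: when $k \ge 1$ and $\al = 0$, Theorem~\ref{thm1} does not apply (it assumes $\al > 0$); however, the claimed bound is then at most $-k+1 \le 0$ and holds trivially since $B$ is nonempty, so the argument is easily patched.
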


\begin{proof}
If $k=0$, or $k \geq 1$ and $\al=0$, then the statement clearly holds. Suppose now $k \geq 1, \al>0$. 
Let $E \su A(n,k)$ be the set of those $k$-dimensional affine subspaces that intersect
$B$ in a nonempty set of Hausdorff dimension at least $\al$. 
%
Then $C \su \bigcup_{P \in E} P$, thus 
$\dim C \leq \dim E + k$ by Lemma \ref{leq}, which means, $\dim E \geq \dim C-k$. 
Applying Theorem \ref{thm1} for $B$ and $E$, we obtain 
$\dim B \geq 2\al-k + \min(\dim C - k, 1)$.
\end{proof}

Our next goal is to show that if a set $B\su\R^n$ contains the
$k$-skeleton of a rotated unit cube around every point of $\R^n$ then
$\dim B\ge k+1$, as it was already stated in the Introduction.
Instead of the $k$-skeleton of the unit cube we will prove (Corollary \ref{ffact2}) the analogous result
for any $k$-Hausdorff dimensional set $S\su\R^n$ that can be covered by 
countably many $k$-dimensional affine subspaces. 
This result will follow from the following theorem.

\begin{theorem}
\label{fact2}
Let $0 \leq k < n$ be integers, $0 \leq \al \leq k$ and $0 \leq r$ be real numbers, $\emptyset \neq C \su \rr^n$, and $B \su \rr^n$ be
such that for every $x \in C$ there exists a $k$-dimensional affine subspace $P$
at distance $r$ from $x$ such that $P$ intersects $B$ in a nonempty set of Hausdorff dimension at least $\al$. 
Then $\dim B \geq 2\al-k + \dim C - (n-1)$.

Specially, if $B$ contains a $k$-dimensional affine subspace at a fixed positive distance from every point of $\R^n$, or 
if $B$ contains the $k$-skeleton of a rotated unit cube around every point of $\rr^n$, then $\dim B\ge k+1$.
\end{theorem}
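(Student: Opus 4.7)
The plan is to generalize the argument used in Corollary~\ref{fact1}: define $E$ as the collection of $k$-dimensional affine subspaces that intersect $B$ in a set of Hausdorff dimension at least $\al$, use the hypothesis on $C$ to bound $\dim E$ from below, and then apply Theorem~\ref{thm1}. The new ingredient compared to Corollary~\ref{fact1} is that a point $x\in C$ no longer lies on $P$ itself but at distance exactly $r$ from $P$; in place of Lemma~\ref{l:easy} we therefore need a dimension bound for the family of distance-$r$ surfaces $S_r(P):=\{x\in\R^n:\dist(x,P)=r\}$.

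Assume first that $k\ge 1$ and $\al>0$; the remaining cases are addressed at the end. Set $E=\{P\in A(n,k):\dim(P\cap B)\ge\al\}$, so by hypothesis $C\subset\bigcup_{P\in E}S_r(P)$. The key step is to establish
\begin{equation*}
\dim\Bigl(\bigcup_{P\in E}S_r(P)\Bigr)\le\dim E+(n-1).
\end{equation*}
This is proved along the lines of Lemma~\ref{l:easy}, but with an extra spherical factor. I would fix a reference $P^*\in A(n,k)$ with direction subspace $V^*$, and on a small neighborhood $U\su A(n,k)$ of $P^*$ choose Lipschitz maps $P\mapsto R_P\in SO(n)$ (close to the identity) and $P\mapsto a_P\in R_P(V^*)^\perp$ so that $P=R_P(V^*)+a_P$. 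Then
\begin{equation*}
S_r(P)=\{R_P(t)+a_P+R_P(w):t\in V^*,\ w\in (V^*)^\perp,\ |w|=r\},
\end{equation*}
and the map $(P,t,w)\mapsto R_P(t)+a_P+R_P(w)$ is locally Lipschitz from $(E\cap U)\times V^*\times(r\cdot S^{n-k-1})$ to $\R^n$. Covering $E$ by countably many such charts yields the estimate, with the extra contribution $k+(n-k-1)=n-1$ coming from the fiber. Combining this with $C\subset\bigcup_{P\in E}S_r(P)$ gives $\dim E\ge\dim C-(n-1)$, and since $\dim C\le n$ implies $\dim C-(n-1)\le 1$, Theorem~\ref{thm1} yields $\dim B\ge 2\al-k+\min(\dim E,1)\ge 2\al-k+\dim C-(n-1)$.

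For the two named applications I would use that the distance from the center of the unit cube to the affine hull of any of its $k$-faces equals $\sqrt{n-k}/2$. The fixed-distance statement follows by taking $C=\R^n$, $\al=k$, and any $r>0$. The rotated-cube statement, for $k\ge 1$, follows by selecting for each $x\in\R^n$ one $k$-face of the chosen rotated cube centered at $x$ and applying the theorem with $C=\R^n$, $\al=k$, and $r=\sqrt{n-k}/2$. In both cases the bound reads $\dim B\ge 2k-k+n-(n-1)=k+1$. The degenerate case $k=0$ of the cube statement, and more generally the case $\al=0$ of the theorem, follows from the same Lipschitz-map argument without invoking Theorem~\ref{thm1}: the inclusion $C\subset\bigcup_{P\in B}S_r(P)$ directly gives $\dim B\ge\dim C-(n-1)$, which for the $0$-skeleton with $r=\sqrt{n}/2$ and $C=\R^n$ yields $\dim B\ge 1$.

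The only non-routine technical point is verifying that the local parametrization $P\mapsto(R_P,a_P)$ is Lipschitz with respect to the natural metric on $A(n,k)$, which amounts to unwrapping the definition recalled in the remark after Theorem~\ref{thm1} via the bi-Lipschitz equivalence with Mattila's metric. Everything else is a straightforward transcription of the proof of Corollary~\ref{fact1}, replacing $\bigcup_{P\in E}P$ by $\bigcup_{P\in E}S_r(P)$ and Lemma~\ref{l:easy} by the lemma above.
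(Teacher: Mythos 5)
Your proof follows essentially the same route as the paper's: define $E$ as the set of $k$-planes meeting $B$ in an $\al$-dimensional set, show $\dim\bigl(\bigcup_{P\in E}S_r(P)\bigr)\le\dim E+(n-1)$ via a locally Lipschitz parametrization with $(n-1)$-dimensional fibres, and apply Theorem~\ref{thm1}; the only cosmetic difference is that you parametrize via $SO(n)$-charts whereas the paper works with orthogonal projections after a finite decomposition of $E$. Two edge cases deserve a word: when $r=0$ your chart $(P,t,w)\mapsto R_P(t)+a_P+R_P(w)$ has a degenerate sphere factor, so one should fall back on Corollary~\ref{fact1} directly, as the paper does; and for $k\ge1$, $\al=0$, the inclusion $C\subset\bigcup_{P\in B}S_r(P)$ you invoke does not make sense (elements of $B$ are points, not $k$-planes, and a point $b\in P\cap B$ need not be at distance $r$ from $x$), but that case is harmless because the claimed bound $2\cdot0-k+\dim C-(n-1)\le 1-k\le0$ is trivial, which is all the paper observes.
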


\begin{proof}

If $r=0$, then we can apply Corollary \ref{fact1} and thus we get $\dim B \geq 2\al-k +\min(\dim C - k, 1) \geq 2\al-k +\dim C - (n-1)$. 

Suppose now that $r>0$.  
If $k=0$, then the condition of Theorem \ref{fact2} means that for every $x \in C$ there exists a point contained in $B$ at distance $r$ from $x$. 
Then $\bigcup_{p \in B} (p + rS^{n-1}) \sp C$, where $S^{n-1}$ denotes the unit sphere of center $0$ in $\rr^n$. 
Let $g: \rr^n \times S^{n-1} \to \rr^n$, $(p,e) \mapsto p+r e$. Clearly, $g$ is Lipschitz and  
$g(B \times S^{n-1})=\bigcup_{p \in B} (p + rS^{n-1})$. Thus we have 
$$\dim C \leq \dim \bigcup_{p \in B} (p + rS^{n-1}) \leq \dim (B \times rS^{n-1}) =\dim B + n-1,$$ thus 
$\dim B \geq \dim C - (n-1)$. 

If $k \geq 1$ and $\al=0$, then the statement is trivially true, 
so suppose now that $k \geq 1$, $\al >0$. We will use a similar argument as in the case $k=0$, but we use Theorem \ref{thm1}. 
Let $E \su A(n,k)$ be the set of those $k$-dimensional affine subspaces that intersect
$B$ in a set of Hausdorff dimension at least $\al$. 
By Theorem \ref{thm1} it
is enough to prove that $\dim E \geq \dim C - (n-1)$. 
For each $P \in E$ let $D(P) \su \rr^n$ be the union of those
$k$-dimensional affine subspaces that are parallel to $P$ and are at distance $r$ from $P$ (in
the Euclidean distance of $\rr^n$).
Clearly, $D(P)$ is exactly the set of those points of $\rr^n$ that
are at distance $r$ from $P$, thus by assumption, $\bigcup_{P \in E} D(P) \sp C$. 
It is easy to see that $\dim D(P)=n-1$ for any $P \in E$.

For any $P \in A(n,k)$, let $V_P$ denote the translate of $P$ containing $0$ and let $V_P^{\perp}$ denote the orthogonal complement of $V_P$. 
It is easy to see that there is a finite decomposition $E=\bigcup_{i=1}^{N} E_i$ such that for all $i$ 
there exists a $P_i \in A(n,k)$ with the following properties: 
the orthogonal projection of $P_i$ onto any
$P\in E_i$ is $P$, and the orthogonal projection of the $(n-k-1)$-sphere $V_{P_i}^{\perp} \cap S^{n-1}$ onto $V_P^{\perp}$ is contained in the
$\frac{1}{2}$-neighborhood of the $(n-k-1)$-sphere $V_P^{\perp} \cap S^{n-1}$, for any $P \in E_i$. 

Using the above properties, one can easily define for all $i$ a locally Lipschitz map  
$h_i: E_i \times D(P_i) \to \rr^n$ such that $h_i(\{P\} \times D(P_i))=D(P)$ for all $P \in E_i$. 
 We obtain 
$$\dim C \leq \dim \bigcup_{P \in E} D(P) =\max_i \dim \bigcup_{P \in E_i} D(P) = \max_i \dim h_i(E_i \times D(P_i)) \leq$$
$$ \leq \max_i \dim (E_i \times D(P_i)) = \max_i \dim E_i + n-1= \dim E + n-1,$$  
and thus $\dim E \geq \dim C - (n-1)$ and we are done. 
\end{proof}

\begin{remark}
In the special cases mentioned in Theorem~\ref{fact2}, the estimate is sharp. It is easy to see that
$B=\R^{k+1}\times \Q^{n-k-1}$ contains a $k$-dimensional affine subspace 
at every positive distance from every point of $\R^n$ and clearly $\dim B=k+1$. 
The construction given in \cite{CCHK} for a set $B$ with $\dim B =k+1$ containing the $k$-skeleton
of a rotated unit cube centered at every point of $\R^n$ is also based on this example.  
\end{remark}

\begin{cor}
\label{ffact2}
Let $0 \leq k < n$ be integers, $S \su \R^n$ with $\dim S=k$ that can be covered by a countable union
of $k$-dimensional affine subspaces. Let $\emptyset \neq C \su \R^n$, $A \su \R^n$ such that 
for all $x \in C$ there exists a rotation $T \in SO(n)$ such that $A$ contains $x + T(S)$. 
Then $\dim A \geq \max(k, k+ \dim C - (n-1))$. 
%
\end{cor}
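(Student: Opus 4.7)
The strategy is to invoke Theorem~\ref{fact2} directly after picking, from the countable cover of $S$, a single $k$-dimensional affine subspace whose intersection with $S$ captures essentially the full Hausdorff dimension of $S$. Write $S\subset \bigcup_{j=1}^\infty P_j$ with each $P_j\in A(n,k)$. Since Hausdorff dimension is countably stable, $\sup_j \dim(S\cap P_j)=\dim S=k$, so for every $\ep\in(0,k)$ there is an index $j$ with $\dim(S\cap P_j)>k-\ep$ and $S\cap P_j\neq\emptyset$ (when $k\ge 1$).

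Fix such a $j$ and set $r:=\dist(0,P_j)\ge 0$. For each $x\in C$ choose the rotation $T_x\in SO(n)$ given by the hypothesis, and let $Q_x:=x+T_x(P_j)$. Because $T_x$ is an isometry fixing the origin, $T_x(P_j)$ lies at distance $r$ from $0$, so $Q_x$ is a $k$-dimensional affine subspace at distance exactly $r$ from $x$. Moreover $Q_x\cap A\supset x+T_x(S\cap P_j)$ is a nonempty set of Hausdorff dimension $\dim(S\cap P_j)>k-\ep$. Thus the hypotheses of Theorem~\ref{fact2} are met with $B=A$, $\al=k-\ep$, and this $r$, giving
$$\dim A \ge 2(k-\ep)-k+\dim C-(n-1)=k-2\ep+\dim C-(n-1).$$
Letting $\ep\to 0$ and combining with the trivial bound $\dim A\ge \dim(x+T_x(S))=\dim S=k$ yields the required $\dim A\ge \max(k,\ k+\dim C-(n-1))$.

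The degenerate case $k=0$ is similar but easier: $S$ is then a nonempty countable set (being covered by countably many points), so picking any $s_0\in S$ we find that $x+T_x(s_0)\in A$ for every $x\in C$ and this point lies at distance $|s_0|$ from $x$. Theorem~\ref{fact2} with $k=\al=0$ and $r=|s_0|$ immediately delivers $\dim A\ge \dim C-(n-1)$, and $\dim A\ge 0$ is trivial.

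The only delicate point is the countable-stability reduction: in general no single $P_j$ need satisfy $\dim(S\cap P_j)=k$ exactly, only the supremum equals $k$. This forces the $\ep$-approximation and the limiting argument, but it is a genuinely harmless step since Theorem~\ref{fact2} allows any $\al\le k$ and its conclusion is continuous in $\al$.
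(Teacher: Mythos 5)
Your argument is correct and follows essentially the same route as the paper: cover $S$ by countably many $k$-planes $P_j$, observe by countable stability that $\sup_j \dim(S\cap P_j)=k$, and for each $x\in C$ transport $S\cap P_j$ inside $A$ by the rotation $T_x$ to produce, at the fixed distance $r=\dist(0,P_j)$, a $k$-plane meeting $A$ in a set of dimension $\ge\dim(S\cap P_j)$, so that Theorem~\ref{fact2} applies. The only (inessential) difference is presentational: the paper applies Theorem~\ref{fact2} simultaneously for every index $i$ with parameter $\al_i=\dim(S\cap S_i)$ and then takes the supremum over $i$, whereas you fix an $\ep>0$, pick a single good index, and let $\ep\to 0$; these are the same argument.
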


\begin{proof}
Clearly, $\dim A \geq k$. Let $S_i \su \rr^n$, $i \geq 1$, be $k$-dimensional affine subspaces such that $S \su \bigcup_{i \geq 1} S_i$. 
Let $r_i=d(0,S_i)$, and
$\al_i=\dim (S_i \cap S)$. Then $\sup_{i \geq 1} \al_i =k$ by $\dim S=k$. The set $A$ has the property that for all $x \in \rr^n$, 
there exists an affine subspace $P=x+T(S_i)$ at distance $r_i$ from $x$ such that $\dim (A \cap P) \geq \al_i$, thus we can apply 
Theorem \ref{fact2} for each $i$. 
We obtain that $\dim A \geq 2\al_i-k + \dim C - (n-1)$ for all $i\geq 1$, and thus $\dim A \geq k + \dim C - (n-1)$. 
\end{proof}

\begin{remark}
The authors in \cite{CCHK} show that the estimate in Corollary \ref{ffact2} is sharp 
if $\dim C=n$ and $S$ can be covered by a countable union
of $k$-dimensional affine subspaces that do not contain the origin. 

On the other hand, if the covering subspaces contain the origin, then the estimate is not always sharp. 
Indeed, if $S$ is a punctured line through the origin and $C=\rr^n$, 
then $A$ is a Nikodym set, thus the conjecture is $\dim A = n$. 
The lower bounds obtained for the dimension of Besicovitch sets give lower bounds for the dimension of Nikodym sets, thus for $\dim A$ as well. 
A survey of the currently best lower bounds can be found in \cite{Ma15}. 
As an example, by \cite{Wo}, $\dim A \geq \frac{n+2}{2}$ which is better than the bound $2$ given by Corollary \ref{ffact2} provided $n >2$. 
\end{remark}

\section{The proof of Theorem \ref{thm1}}
\label{bas}

In this section we prove Theorem \ref{thm1} subject to a lemma (Lemma \ref{Ade}), which will be proved in Sections \ref{codd} and \ref{ll2}. 

We start with addressing measurability issues. 
For the definition of analytic sets, see e.g. \cite{Fr}.

\def\phi{\varphi}
\begin{lemma}
\label{use}
For $X\subset \R^n$, $\al>0$ and $c \geq 0$ let 
\begin{align*}
E_{{\alpha},c,X} & =\{P \in A(n,k) \colon\hau^\alpha_{\infty}(P\cap X) > c \}.
\end{align*}
If $X\subset \R^n$ is bounded $G_\delta$,
then  $E_{\alpha,c,X}$ is analytic. 
\end{lemma}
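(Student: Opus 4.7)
The plan is to write $E_{\alpha,c,X}$ as the projection onto $A(n,k)$ of a Borel subset of the Polish product space $A(n,k) \times \iK(\R^n)$, where $\iK(\R^n)$ denotes the hyperspace of nonempty compact subsets of $\R^n$ equipped with the Hausdorff metric; any such projection is analytic. Since $X$ is $G_\de$, the intersection $P \cap X$ is Borel for every $P$. The Hausdorff content $\hau^\al_\infty$ is a Choquet capacity on $\R^n$, so by Choquet's capacitability theorem,
$$\hau^\al_\infty(P \cap X) = \sup\{\hau^\al_\infty(K) : K \su P \cap X,\ K \text{ compact}\}$$
for every $P$. Consequently,
$$E_{\al,c,X} = \{P \in A(n,k) : \exists K \in \iK(\R^n),\ K \su P,\ K \su X,\ \hau^\al_\infty(K) > c\}$$
is the projection to $A(n,k)$ of the set of pairs $(P,K)$ satisfying the three conditions on the right.

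It then suffices to verify that each of these three conditions defines a Borel subset of the product. First, $\{(P,K) : K \su P\}$ is closed: if $(P_n,K_n) \to (P,K)$ with $K_n \su P_n$, then each $x \in K$ is a limit of bounded points $x_n \in K_n \su P_n$, and since the natural metric on $A(n,k)$ is strongly equivalent to the one in \cite{Ma} we get Hausdorff convergence of $P_n \cap \ol{B}$ to $P \cap \ol{B}$ on any fixed large ball $\ol{B}$, so $x \in P$. Second, $\{K : K \su X\}$ is $G_\de$: writing $X = \bigcap_n U_n$ with $U_n$ open, each $\{K : K \su U_n\}$ is open in $\iK(\R^n)$ (since any sufficiently small Hausdorff perturbation of a compact $K \su U_n$ remains in $U_n$). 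Third, $\{K : \hau^\al_\infty(K) > c\}$ is $F_\si$: the map $K \mapsto \hau^\al_\infty(K)$ is upper semicontinuous on $\iK(\R^n)$, because given $\ep > 0$ one can approximate $\hau^\al_\infty(K)$ within $\ep$ by a \emph{finite} open cover $V_1,\ldots,V_N$ of $K$ (a finite subcover exists by compactness), and a Lebesgue number argument shows every compact $K'$ sufficiently close to $K$ in Hausdorff metric is still contained in $\bigcup_i V_i$.

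Combining these three facts, the set in the product is Borel, so $E_{\al,c,X}$ is analytic as its projection. The boundedness of $X$ is used to restrict attention to compact sets lying in a fixed closed ball. The step most likely to require care is the capacitability used in the opening reduction: although it is standard that $\hau^\al_\infty$ is a Choquet capacity, verifying continuity from below on increasing sequences and continuity from above on decreasing compact sequences is not entirely trivial, and it is cleanest to cite a standard reference for this rather than reprove it from scratch.
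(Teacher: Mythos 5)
Your proposal is correct and follows essentially the same route as the paper: both invoke the Choquet capacitability theorem for the capacity $\hau^\al_\infty$ to reduce the content condition on $P\cap X$ to the existence of a compact witness $K$, and both realize $E_{\al,c,X}$ as the projection of a Borel set in a hyperspace of compacts (you use $A(n,k)\times\iK(\R^n)$ with the closed incidence relation $K\su P$, while the paper packages the same data as compact subsets of the tautological bundle $T$ living in a single fibre — these setups are equivalent via $K\mapsto(\pi'(K),\phi(K))$). Your verification that the three defining conditions are respectively closed, $G_\delta$, and $F_\sigma$ matches the paper's decomposition $\iK_{\alpha,c,X}=\iK(\phi^{-1}(X))\cap\bigcup_n\iK^\alpha_{c+1/n}$.
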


Lemma \ref{use} is an unpublished result of M. Elekes and Z. Vidny\'anszky. Similar statements were also proved in \cite{De}. 
For completeness, we include a proof here.

\begin{remark}
It is easy to see that if $X\subset \R^n$ is compact, then $E_{\alpha,c,X}$ is $F_{\sigma}$, thus also analytic. 
Therefore, to prove Theorem \ref{thm1} (or any of the above mentioned results) with the extra assumption that $B \su \rr^n$ is compact, the following argument could be skipped. 
\end{remark}

\begin{proof}
Let 
$$T=\{(P, x) \in A(n,k) \times \R^n \,:\, x\in P\},$$
this is the natural vector bundle of rank $k$ over $A(n,k)$. 
Let $\phi:T\to \R^n$ be defined by $\phi((P,x))=x$, and let $\pi:T\to A(n,k)$ be defined by $\pi((P,x))=P$. 
On $T$ we can consider the metric inherited from a product metric on $A(n,k)\times \R^n$ so that $\phi$ is isometry on all fibres.

Let $\iK$ be the space of those non-empty compact subsets of $T$ which lie in one fibre, that is,
$$\iK=\{K\subset T \,:\,K \text{ is non-empty compact, and }\pi(K)\text{ is a singleton}\}.$$
This is a complete metric space in the Hausdorff metric. 
Not to mix up singletons and their unique elements, let $\pi':\iK\to A(n,k)$ be defined by $\{\pi'(K)\}=\pi(K)$.

Since $X$ is $G_\delta$, $\phi^{-1}(X)$ is $G_\delta$ in $T$. It is easy to check that
$$\iK(\phi^{-1}(X)) \stackrel{\text{def}}{=} \{ K\in \iK \,:\,K\subset \phi^{-1}(X)\}$$
is also $G_\delta$ in $\iK$.

For $\al>0$ and $d > 0$, let
$$\iK^\alpha_d = \{ K \in \iK \,:\,\hau^\alpha_\infty(\phi(K))\ge d\}.$$
It is easy to see that these are closed sets in $\iK$.

Let
$$\iK_{\alpha, c, X} = \iK(\phi^{-1}(X)) \cap \bigcup_n \iK^\alpha_{c+1/n}.$$
Clearly, this is a Borel set in $\iK$. We claim that 

\begin{equation}
E_{\alpha, c, X}= \pi'(\iK_{\alpha, c, X}).
\label{eqmm1}
\end{equation}
Clearly, the right hand side consists of those $P\in A(n,k)$ for which $P\cap X$ contains a compact subset $K$ with $\hau^\alpha_\infty(K) > c$. 
We will show that for any $P \in A(n,k)$, 
\begin{equation}
\label{eqeq1}
\exists K \su P\cap X \ \text{compact with} \ \hau^\alpha_\infty(K) > c \ \Longleftrightarrow \ \hau^\alpha_\infty(P\cap X)>c,
\end{equation}
which implies \eqref{eqmm1}.

To prove \eqref{eqeq1}, we use the concept of capacities (see e.g.~\cite{Kec}, Section 30). 
\begin{defin*}
Let $Y$ be a Hausdorff topological space. A capacity on $Y$ is a map $\ga: \mathcal{P}(Y) \to [0,\infty]$ such that 
\begin{enumerate}[(i)]
	\item \label{one}
	$A \su B \ \Longrightarrow \ \ga(A) \leq \ga(B)$,
	
	\item \label{two}
	$A_0 \su A_1 \su \cdots \ \Longrightarrow \  \ga(A_n) \to  \ga(\cup_n A_n)$,
	
	\item \label{thr}
	for any compact $K \su Y$ we have  $\ga(K) < \infty$, and if $\ga(K) < r$, then for some open $U \sp K$, $\ga(U) < r$. 
\end{enumerate}
\end{defin*}
We claim that $\ga=\hau^\alpha_\infty$ is a capacity on $\ol{B(0,R)}$ for any $R>0$. Indeed, it is clear that $\hau^\alpha_\infty$ satisfies 
properties \eqref{one} and \eqref{thr} in any metric space, 
and it follows from the results in \cite{Da} that \eqref{two} holds for $\hau^\alpha_\infty$ in any compact metric space. 

Since $X$ is bounded $G_\delta$ (thus also analytic), and $\hau^\alpha_\infty$ is a capacity on the compact metric space $\ol{B(0,R)}$ with 
$X \su \ol{B(0,R)}$, the Choquet Capacitability Theorem (\cite{Kec}, (30.13)) can be applied, and it gives precisely \eqref{eqeq1}. 

Finally, \eqref{eqmm1} implies that $E_{\alpha, c, X}$ is a continuous image of a Borel set, thus analytic, and we are done.
\end{proof}

Note that the statement of Theorem \ref{thm1} is trivially true if $\dim E=0$, since $2 \al-k \leq \al$.

\begin{lemma}\label{assump}
Let $s=\min (\dim E,1) >0$. 
\label{ass}
We can make the following assumptions in the proof of Theorem \ref{thm1}:
\begin{enumerate}[(i)]

\item\label{gdelta}
$B$ is a $G_\delta$ set, that is, a countable intersection of open sets;

\item\label{poz}
$\hau^\alpha(P\cap B)>0$ for every $P\in E$;

\item
\label{bounded}
$B$ is bounded;

\item
\label{comp}\label{p2}
$E \su A(n,k)$ is compact, and $\hau^s(E)>0$. Moreover,
there is $\ep>0$ such that for every $P\in E$,
$$\hau^{\al}_{\infty}(P \cap B) \ge \ep.$$

\end{enumerate}
\end{lemma}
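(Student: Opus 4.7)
My plan is to establish the four assumptions in sequence, each time allowing myself to shrink $\al$ and $s=\min(\dim E,1)$ by an arbitrarily small amount. Since the target bound $2\al-k+\min(\dim E,1)$ is monotone and continuous in both $\al$ and $\dim E$, it suffices to prove Theorem~\ref{thm1} under the four listed assumptions with $\al,s$ replaced by $\al'<\al$ and $s'<s$; passing to the limits $\al'\uparrow \al$, $s'\uparrow s$ at the end recovers the original statement.

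For (i), Borel regularity of Hausdorff measure lets me choose, for each $m\in\mathbb{N}$, a $G_\de$ set $Y_m\sp B$ with $\hau^{d+1/m}(Y_m)=\hau^{d+1/m}(B)=0$, where $d=\dim B$, and then put $Y:=\bigcap_m Y_m$. Then $Y$ is $G_\de$, $B\su Y$, $\dim Y=d=\dim B$, and the hypothesis persists since $P\cap B\su P\cap Y$. For (ii), I replace $\al$ by any $\al'<\al$: the hypothesis $\dim(P\cap B)\ge \al>\al'$ forces $\hau^{\al'}(P\cap B)=\infty>0$. For (iii), I decompose $B=\bigcup_j(B\cap \ol{B(0,j)})$ and set $E_j=\{P\in E:\hau^{\al'}(P\cap B\cap \ol{B(0,j)})>0\}$. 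By continuity from below of $\hau^{\al'}$ on Borel sets, $E=\bigcup_j E_j$, so $\dim E=\sup_j\dim E_j$; I pick $j$ with $\min(\dim E_j,1)>s'$ and replace $E$ by $E_j$ and $B$ by the bounded $G_\de$ set $B\cap \ol{B(0,j)}$.

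The bulk of the work is (iv). With $B$ now bounded and $G_\de$, Lemma~\ref{use} is applicable. I decompose $E=\bigcup_m E^{(m)}$ with $E^{(m)}=\{P\in E:\hau^{\al'}_\infty(P\cap B)\ge 1/m\}$; this covers $E$ because $\hau^{\al'}(X)>0\iff\hau^{\al'}_\infty(X)>0$. Choose $m$ so that $\dim E^{(m)}$ is close to $\dim E$. The essential observation is that
\[ \widetilde E \;:=\; \{P\in A(n,k):\hau^{\al'}_\infty(P\cap B)>1/(2m)\} \;\sp\; E^{(m)} \]
is analytic by Lemma~\ref{use}, so $\dim \widetilde E\ge \dim E^{(m)}$. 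By Davies' theorem on analytic sets (any analytic set of positive $\hau^t$-measure contains a compact subset of positive finite $\hau^t$-measure), for any $s''<\min(\dim \widetilde E,1)$ there is a compact $E'\su \widetilde E$ with $0<\hau^{s''}(E')<\infty$, and each $P\in E'$ inherits $\hau^{\al'}_\infty(P\cap B)>1/(2m)=:\ep$, exactly as required.

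The only real obstacle is step (iv). Since $E$ is given as an a priori arbitrary set with no measurability assumed, Davies' theorem does not apply to $E$ itself. The workaround is to use Lemma~\ref{use} to obtain an \emph{analytic superset} $\widetilde E$ of a suitably thresholded piece of $E$, and then carefully separate the two thresholds ($1/m$ defining $E^{(m)}$ versus $1/(2m)$ defining $\widetilde E$) so that the compact subset extracted from $\widetilde E$ still satisfies the uniform lower bound $\hau^{\al'}_\infty(P\cap B)>\ep$.
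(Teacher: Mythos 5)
Your proposal is correct and follows essentially the same strategy as the paper: replace $B$ by a $G_\de$ superset of the same dimension, lower $\al$ slightly to pass from $\dim\ge\al$ to positive $\hau^\al$-measure, decompose into bounded pieces, and finally use Lemma~\ref{use} to produce an analytic superset of a thresholded part of $E$ from which a compact sub-family of positive $\hau^{s''}$-measure is extracted. The only cosmetic differences are that the paper cites Howroyd's theorem where you cite Davies' (both apply here), and it works directly with the single strict-threshold analytic sets $E_{\al,1/i,B}$ covering $E$ rather than your two-threshold device with $E^{(m)}$ (with $\geq 1/m$) sitting inside $\widetilde E$ (with $>1/(2m)$), which is a slight but harmless detour.
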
 
Statement (\ref{poz}) is clearly weaker than (\ref{p2}); it is stated
to guide the proof.

\begin{proof} 

First we remark that if $E$ is replaced by any subset $\ti{E} \su E$, or $B$ is replaced by any superset $\ti{B} \supset B$, 
then the condition $\dim (P \cap \ti{B} ) \geq \dim (P \cap B ) \geq \al$ in Theorem \ref{thm1} is trivially satisfied for all $P \in \ti{E} \su E$. 

\begin{enumerate}[(i)]

\item
Let $\ti{B} \sp B$ be a $G_{\de}$ set with $\dim B= \dim \ti{B}$; the existence of such set is proved for example in \cite{Fr}. 
Clearly, it is enough to prove Theorem~\ref{thm1} for $\ti{B}$ replacing $B$.

\item
Clearly, replacing $\alpha$ with a slightly smaller value, we may assume, without loss of generality, that $\hau^{\alpha}(P\cap B)>0$ for every $P\in E$.

\item
If $B$ is not bounded then consider $B = \cup_n B_n$ where $B_n$ is bounded $G_{\de}$ and define $E_n=\{P \in E \colon \hau^{\alpha}(P\cap B_n)>0 \}$. 
Clearly, $E=\cup_n E_n$ thus $ \dim E=\sup\{ \dim E_n \colon n \in \mathbb{N}\}$. 
If Theorem~\ref{thm1} holds for the bounded set $B_n$ and $E_n \su A(n,k)$ for every $n$ then it holds for $B$ and $E$ as well. 
Thus we can assume that $B$ is bounded. 

\item
By \eqref{gdelta}, we may assume that $B$ is $G_\delta$.
By \eqref{poz}, for every $P\in E$, $\hau^\alpha(P\cap B)>0$, and thus $\hau^\alpha_\infty(P\cap B)>0$.
Thus $E\su \cup_{i=1}^\infty E_{\al,1/i,B}$, where the sets
$E_{\al,1/i,B}$ are the analytic sets given by Lemma~\ref{use}.
For every $\delta>0$, $\hau^{s-\delta}(E)=\infty$ and therefore
there is $i=i(\delta)$ with $\hau^{s-\delta}(E_{\al,1/i,B})>0$.
By Howroyd's theorem \cite{Ho}, there is a compact set
$E^\delta\subset E_{\al,1/i,B}$ with $\hau^{s-\delta} (E^\delta)>0$.
If Theorem~\ref{thm1} holds for these compact sets $E^\delta$, then
$\dim B \geq 2 \al - k+s-\delta$ for every $\delta>0$, which finishes the proof.
\end{enumerate}
\end{proof}

Let $e_0=(0,\dots,0)$; let $e_1=(1,0,\dots,0), \dots, e_n=(0,\dots,0,1)$ be the standard basis vectors of $\rr^n$, and 
let $V$ be the $k$-dimensional linear space generated by $e_1, \dots, e_k$. Put $H_0=V^{\bot}$, and  
$H_i=e_i + H_0$. Then $H_i$ is an $n-k$-dimensional affine subspace for all $i=1,\dots,k$. 
We use the sets $H_i$ ($i=0,\dots,k$) to describe the structure of $E$ by investigating the intersection of the elements of $E$ with them.

Let $C$ denote the convex hull of the vectors $e_0, e_1,\dots,e_k$ in $V$, $Q \su H_0$ the 
$n-k$-dimensional closed unit cube of center $e_0$ in $H_0$, and $S= C \times Q \su \rr^n$. 
Fix $\de_0>0$ and an open set $S'$ such that 
\begin{equation}
\label{sub}
S'_{\de_0} \subset S,
\end{equation}
 where $S'_{\de_0}$ denotes the open $\de_0$-neighborhood of $S'$.

\begin{lemma}\label{ci}
We can make the following further assumptions in Theorem \ref{thm1}.
\begin{enumerate}[(I)]
\item\label{p1} For every $P\in E$, $P \cap H_i$ is a singleton and contained in $S$ for all $i=0,1,\dots,k$;
\item \label{p3}   $B\subset S'$.
\end{enumerate}
\end{lemma}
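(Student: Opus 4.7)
The plan is to exhibit an affine bilipschitz self-map $T \co \R^n \to \R^n$ such that, after replacing $B$ with $T(B)$ and $E$ with $\{T(P) : P \in E\}$, both conditions (I) and (II) hold. Any non-degenerate affine map sends $k$-dimensional affine subspaces to $k$-dimensional affine subspaces, is bilipschitz on any bounded set, and preserves Hausdorff dimension of subsets of $\R^n$ and of $A(n,k)$; consequently, the hypothesis $\dim(P\cap B)\ge\al$ and all the reductions obtained in Lemma~\ref{ass} remain in force after the transformation.

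First I would shrink $E$ to a small $\rho$-neighborhood of a fixed element $P_*\in E$. Because $E$ is compact with $\hau^s(E)>0$, I cover $E$ by finitely many open $\rho$-balls of arbitrarily small diameter $\eta>0$; $\si$-subadditivity gives a ball whose intersection with $E$ has positive $\hau^s$-measure, and Howroyd's theorem supplies a compact subset of positive $\hau^s$-measure inside that intersection. Relabelling this subset as $E$, every $P\in E$ has direction $V_P$ in the Grassmannian $G(n,k)$ within $O(\eta)$ of $V_{P_*}$.

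Next, let $R$ be a rotation of $\R^n$ with $R(V_{P_*})=V$, and take $T$ to be the composition of $R$ with a homothety $x\mapsto\lambda x+t$ for some $\lambda>0$ small and $t\in\R^n$ to be chosen. For $\eta$ small enough, each $T(P)$ is transverse to every hyperplane $H_i=e_i+V^\perp$, so $T(P)\cap H_i$ is a singleton, and its $V$-coordinate is automatically $e_i\in C$. A direct computation shows its $V^\perp$-coordinate tends, as $P\to P_*$, to $\lambda\,\mathrm{proj}_{V^\perp}(R(x_{P_*}))+\mathrm{proj}_{V^\perp}(t)$ (independently of $i$) for any base point $x_{P_*}\in P_*$. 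Choosing $\mathrm{proj}_{V^\perp}(t)=-\lambda\,\mathrm{proj}_{V^\perp}(R(x_{P_*}))$ makes this limit vanish, and then for $\lambda$ and $\eta$ sufficiently small all singletons land inside the cube $Q$, establishing (I).

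The remaining free parameter $\mathrm{proj}_V(t)$ is used to place $T(B)$, which has diameter $\le\lambda\diam(B)$ and can therefore be made arbitrarily small, inside the non-empty open set $S'$; this is possible because $S$ has non-empty interior (since $1\le k<n$) and $S'\su S$ is open. This yields (II). The only mildly delicate point is that (I) and (II) must be arranged simultaneously, but (I) pins down only $\mathrm{proj}_{V^\perp}(t)$ (up to errors that vanish with $\lambda$ and $\eta$), while (II) is achieved through the independent parameter $\mathrm{proj}_V(t)$ together with the smallness of $\lambda$, so the two requirements do not conflict.
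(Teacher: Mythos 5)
Your proposal is correct and takes essentially the same route as the paper's very terse proof: restrict to a small piece of $E$ of positive $\hau^s$-measure and apply a single similarity transformation (rotation, scaling, translation) to arrange both (I) and (II), exploiting that similarities preserve Hausdorff dimension of both $B$ and $E$. You work out explicitly the choice of rotation, scaling factor, and the two components of the translation, whereas the paper merely asserts that a finite cover of $E$ together with suitable similarity transformations (for (I)) and a homothety (for (II)) does the job.
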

\begin{proof} \ 
\begin{enumerate}[(I)]
\item We can cover $E$ by finitely many compact subsets for which \eqref{p1} holds after applying a suitable similarity transformation.
\item Since we may assume that $B$ is bounded, this can be obtained after applying a homothety.
\end{enumerate}
\end{proof}

Let us now fix $B$, $E$, $\ep$, $S'$, $\delta_0$ (and $s$ and $\alpha$) with properties given by Lemma~\ref{ass} and such that Lemma~\ref{ci} is satisfied. That is, $B$ is bounded and $G_\delta$, $E$ is compact and $\hau^s(E)>0$, and
\begin{equation}\label{a1}
\hau^{\al}_{\infty}(P \cap B) \geq \ep
\end{equation}
for all $P \in E$ for a fixed $\ep>0$.

We apply Frostman's lemma (see e.g. \cite{Ma}) 
to obtain a probability measure $\mu$ on $A(n,k)$ (for which Borel and analytic sets are measurable) supported on $E$
for which
\begin{equation}
\label{Frost}
\mu (B(P,r)) \lesssim r^s 
\end{equation}
for all $r>0$ and all $P \in E$. 

\def\eps{\ep}


Now we turn to estimating the dimension of the set $B$. Our aim is to show that
$$\hau^{2 \al-k+s-\ga}(B)>0$$
for any $\ga >0$. Fix $\ga>0$, and let $$u=2 \al-k+s-\ga.$$
Let $M$ be a positive integer such that
$$\sum_{k=M}^\infty 1/k^2 < \ep \quad \text{ and }\quad 2^{-M+1} \leq \de_0.$$ 
Let $B \su \bigcup_{i=1}^{\infty} B(x_i,r_i)$ be any countable cover with $2 r_i \leq 2^{-M}$ for all $i$.
For any $l \geq M$, let 
$$J_l=\{ i : 2^{-l} < r_i \leq 2^{-l+1} \}.$$ 
Let $R_l=\cup_{i \in J_l} B(x_i,r_i)$, and $B_l=R_l \cap B$. Then $B=\cup_{l=M}^{\infty} B_l$. 

Our aim is to find a big enough subset of $B$ that is covered by balls of approximately the same radii and such that many of the affine subspaces of $E$ have big intersection with it. 

\begin{remark}
\label{meas}
In the subsequent proofs, applications of Lemma~\ref{use} imply that the sets we take $\mu$-measure of are $\mu$-measurable, since they are in the $\sigma$-algebra generated by analytic sets. 
\end{remark}

\def\rc{l}

\begin{lemma}
\label{prob2}
There exists an integer $\rc \geq M$ such that 
\begin{equation}
\label{B_l}
\mu\left(P \in E \colon \hau^{\al}_{\infty}(P \cap B_{\rc}) \geq 
\frac{1}{\rc^2 }\right) \geq \frac{1}{\rc^2 }.
\end{equation}
\end{lemma}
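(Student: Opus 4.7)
My plan is to argue by contradiction and exploit countable subadditivity of the Hausdorff premeasure $\hau^{\al}_{\infty}$. Suppose the conclusion of the lemma fails: for every $\rc \geq M$ the set
$$F_\rc := \{P \in E : \hau^{\al}_{\infty}(P \cap B_\rc) \geq 1/\rc^2\}$$
satisfies $\mu(F_\rc) < 1/\rc^2$. I would then aim to produce a single $P \in E$ lying outside every $F_\rc$. For such a $P$, since $B = \bigcup_{\rc \geq M} B_\rc$ and $\hau^{\al}_{\infty}$ is countably subadditive, one immediately gets
$$\hau^{\al}_{\infty}(P\cap B) \;\leq\; \sum_{\rc \geq M} \hau^{\al}_{\infty}(P \cap B_\rc) \;<\; \sum_{\rc \geq M} \frac{1}{\rc^2} \;<\; \ep,$$
directly contradicting \eqref{a1}.

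Before combining the estimates, I need to verify that each $F_\rc$ is $\mu$-measurable, since $\mu$ is only guaranteed to measure sets in the analytic $\sigma$-algebra. By Lemma~\ref{assump}, $B$ is bounded and $G_\delta$; each $R_\rc$ is open, so $B_\rc = R_\rc \cap B$ is bounded and $G_\delta$ as well. Lemma~\ref{use} therefore guarantees that $\{P : \hau^{\al}_{\infty}(P \cap B_\rc) > c\}$ is analytic for every $c \geq 0$. Writing
$$F_\rc = \bigcap_{j=1}^\infty \{P : \hau^{\al}_{\infty}(P \cap B_\rc) > 1/\rc^2 - 1/j\}$$
expresses $F_\rc$ as a countable intersection of analytic sets, hence it lies in the analytic $\sigma$-algebra and is $\mu$-measurable by Remark~\ref{meas}.

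For the union bound, note that without loss of generality one may assume $\ep \leq 1/2$, since shrinking $\ep$ only weakens hypothesis \eqref{a1} (and the choice of $M$ still fulfils $\sum_{\rc \geq M} 1/\rc^2 < \ep$ if it did originally, or after mildly enlarging $M$). Then
$$\mu\Bigl(\bigcup_{\rc \geq M} F_\rc\Bigr) \;\leq\; \sum_{\rc \geq M} \mu(F_\rc) \;<\; \sum_{\rc \geq M} \frac{1}{\rc^2} \;<\; \ep \;\leq\; 1/2,$$
and since $\mu$ is a probability measure supported on $E$, the complement $E \setminus \bigcup_{\rc} F_\rc$ has positive $\mu$-measure and in particular is nonempty, supplying the desired $P$. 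The only delicate ingredient is the measurability step, which ultimately rests on Lemma~\ref{use} (and hence on the Choquet capacitability theorem applied to the capacity $\hau^{\al}_{\infty}$); everything else is a routine Borel--Cantelli-style union bound combined with countable subadditivity of the Hausdorff premeasure and the choice of $M$ that makes the tail $\sum_{\rc \geq M} 1/\rc^2$ small.
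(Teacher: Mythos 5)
Your proposal is correct and follows essentially the same route as the paper: a contradiction argument in which the assumed failure plus the union bound $\mu(\bigcup_\rc F_\rc) \le \sum_\rc 1/\rc^2 < \ep \le 1$ shows the $F_\rc$ cannot exhaust $E$, and countable subadditivity of $\hau^\al_\infty$ then contradicts \eqref{a1} for any $P$ outside the union. Your explicit treatment of measurability (writing $F_\rc$ as a countable intersection of the analytic sets from Lemma~\ref{use}) is exactly what the paper delegates to Remark~\ref{meas}; the extra step of taking $\ep\le 1/2$ is harmless but unnecessary, since $\ep\le 1$ already suffices.
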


\begin{proof}
Let $$A_l=\left\{P \in E \colon \hau^{\al}_{\infty}(P \cap B_{l}) \geq \frac{1}{\rc^2}\right\},$$ and 
assume that $\mu(A_{\rc}) < 1/{\rc}^2$ for all ${\rc} \geq M$.  
Since $\sum_{l=M}^\infty 1/l^2 < 1$ (we may assume $\ep\le 1$), these sets $A_{\rc}$ cannot cover $E$. Therefore, there exists $P\in E$
such that $\hau^{\al}_{\infty}(P \cap B_{\rc})< 1/l^2$, and thus
$\hau^{\al}_{\infty}(P \cap B)< \sum_{l=M}^\infty 1/l^2<\eps$, which contradicts \eqref{a1}.
\end{proof}

Fix the integer $\rc$ obtained by Lemma \ref{prob2} and let
\begin{equation}
\label{tie}
\ti{E}=A_{\rc}=\left\{P \in E \colon \hau^{\al}_{\infty}(P \cap B_{\rc}) \geq \frac{1}{\rc^2} \right\}.
\end{equation}
We will use the notation $\ti{P}=P \cap B_{\rc}$ for any $P \in \ti{E}$. 
We have 
\begin{equation}
\label{est}
\mu( \ti{E}) \geq \frac{1}{\rc^2} \ \textrm{and} \ \hau^{\al}_{\infty}(\ti{P})\geq \frac{1}{\rc^2}
\end{equation}
for every $P \in \ti{E}$ by Lemma \ref{prob2}. 
Note also that 
\begin{equation}
\label{bo2}
\ti{P}_{\de_0} \subset S
\end{equation}
for every $P \in \ti{E}$  by  \eqref{p3} of Lemma \ref{ci} and the definition of $S'$ and $\de_0$.

Let 
\begin{equation}
\label{Fdef}
F= \bigcup_{P \in \ti{E}} \ti{P} \su B_{\rc} \su B.
\end{equation}

Our aim is to find a lower estimate for $\leb^n (F_{\de})$. We will prove the following.

\begin{lemma}
There is a constant $c>0$ depending on $E$, $n$, and $k$ but independent of $\rc$, $\ep$, $\gamma$ and the covering of $B$ such that, 
for every $0<\delta\le \de_0$,
\label{Ade} 
$$\leb^n(F_{\de}) \ge c
\frac{\de^{n-(2\al-k+s)}}{\rc^8 \log \frac{1}{\de}}.
$$ 
\end{lemma}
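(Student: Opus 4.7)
The natural approach is an $L^2$/Cauchy--Schwarz energy argument coupling the Frostman measure $\mu$ on $\ti E$ with $\al$-dimensional Frostman measures on the slices $\ti P$. For each $P\in \ti E$, since $\hau^{\al}_{\infty}(\ti P)\ge 1/\rc^2$, I would apply Frostman's lemma on $\ti P$ to produce a Radon measure $\lambda_P$ supported on $\ti P$ with $\lambda_P(\R^n)\gkb 1/\rc^2$ and $\lambda_P(B(x,r))\lkb r^\al$ for all $x,r$. The relevant test measure on $F$ is then
$$\nu := \int_{\ti E} \lambda_P\, d\mu(P),$$
which is supported in $F \su F_\de$ and has total mass $\nu(\R^n)\gkb \mu(\ti E)/\rc^2 \gkb 1/\rc^4$. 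Convolving with a standard bump $\ph_\de$ of scale $\de$, Cauchy--Schwarz gives
$$\leb^n(F_{2\de}) \gkb \frac{\nu(\R^n)^2}{\|\nu * \ph_\de\|_2^2},$$
so it is enough to show $\|\nu*\ph_\de\|_2^2 \lkb \de^{-(n-(2\al-k+s))}\log(1/\de)$ with an implicit constant independent of $\rc,\de,\ep$.

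\textbf{Reduction to a slice estimate.} A standard computation gives $\|\nu*\ph_\de\|_2^2 \sim \de^{-n} I(\de)$ where
$$I(\de) = \iint_{\ti E\times\ti E} \iint_{|x-y|\le\de} d\lambda_P(x)\, d\lambda_{P'}(y)\, d\mu(P)\, d\mu(P').$$
For fixed $P,P'$, the Frostman property of $\lambda_P$ forces the $x$-integral to be $\le \lambda_P(B(y,\de)) \lkb \de^\al$, and simultaneously confines $y$ to the slice $P'\cap P_\de$. Consequently $I(\de) \lkb \de^\al \sup_{P \in \ti E} J(P)$ where
$$J(P) := \int_{\ti E} \lambda_{P'}\bigl(P'\cap P_\de\bigr)\, d\mu(P'),$$
and the entire problem reduces to proving the uniform slice estimate $J(P) \lkb \de^{\al-k+s}\log(1/\de)$.

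\textbf{Main difficulty: the slice estimate.} This is the geometric heart of the argument, corresponding to the work of Sections~\ref{codd} and~\ref{ll2}. I would split the $P'$-integration dyadically according to $r=\rho(P,P')$: by \eqref{Frost} the $\mu$-mass of the annulus $\{r/2<\rho(P,P')\le r\}$ is $\lkb r^s$, so one needs a purely \emph{geometric} bound for $\lambda_{P'}(P'\cap P_\de)$ when $\rho(P,P') \sim r \ge \de$. The expected shape is: $P' \cap P_\de$ lies in a neighborhood of a lower-dimensional affine subspace of $P'$, whose transverse thickness inside $P'$ is $\sim \de/r$; hence it can be covered by roughly $(r/\de)^{\text{(transverse dim)}}$ balls of radius $\de$, on each of which the $\al$-Frostman bound gives $\lambda_{P'}$-mass $\lkb \de^\al$. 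Multiplying the $\mu$-mass $r^s$ by this $\lambda_{P'}$-mass and summing over the $O(\log(1/\de))$ dyadic scales $r\in[\de,\diam E]$ yields the bound $\de^{\al-k+s}\log(1/\de)$. The principal obstacle is establishing this geometric slice estimate with the correct exponent uniformly in $P$; this is where property~(\ref{p1}) of Lemma~\ref{ci}---that every $P\in E$ meets the reference $(n-k)$-planes $H_0,\dots,H_k$ transversely inside $S$---becomes essential, because it lets one locally parameterise $A(n,k)$ near $E$ by the $k+1$ intersection points $P\cap H_i$ and thereby compare the natural metric $\rho$ on $A(n,k)$ quantitatively to the geometry of the slices $P'\cap P_\de$. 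Combined with the accounting above, one obtains $\leb^n(F_\de) \gkb \rc^{-8}\cdot \de^{n-(2\al-k+s)}/\log(1/\de)$, the exponent~$8$ absorbing the two factors of $\rc^{-2}$ from $\mu(\ti E)$ and from each $\lambda_P(\R^n)$.
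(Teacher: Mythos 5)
Your proposal takes a genuinely different, dual route from the paper's.  The paper computes an $L^2$ norm of the map $y \mapsto \int_{\ti E}\chi_{\ti P_\de}(y)\,d\mu(P)$, lower-bounds $\int_{\ti E}\leb^n(\ti P_\de)\,d\mu$ directly from the Hausdorff content estimate $\hau^\al_\infty(\ti P)\ge \rc^{-2}$ via covering numbers, and converts the geometric Lemma~\ref{gengeo} into an upper bound for $\iint \leb^n(\ti P_\de\cap\ti P'_\de)$.  You instead build a Frostman measure $\lambda_P$ on each slice $\ti P$, fibre them into $\nu=\int\lambda_P\,d\mu(P)$, and run the energy estimate on $\nu*\ph_\de$.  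Both variants rest on exactly the same geometric input (the slab estimate of Lemma~\ref{code}), decompose the same quantity dyadically in $\rho(P,P')$, and feed in the Frostman bound \eqref{Frost} at each scale, so they yield the same exponents and the same $\log(1/\de)$ loss when $s=1$.  What the paper's route buys is that it never needs any measure on the slices: the lower bound $\leb^n(\ti P_\de)\gkb \de^{n-\al}/\rc^2$ comes purely from covering numbers, and one only ever integrates Lebesgue measures of tubes, which are manifestly Borel in $(P,P')$.  Your route is closer in spirit to the classical energy-integral formulation of Frostman-type arguments.

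Two points in your sketch would need real work to make rigorous.  First, for $\nu=\int_{\ti E}\lambda_P\,d\mu(P)$ to be a well-defined Borel measure you need $P\mapsto\lambda_P$ to be a $\mu$-measurable (weak*) family.  Frostman's lemma is an existence statement, not a measurable selection, and $\ti P=P\cap B_\rc$ is only $G_\delta$, so you would have to invoke a measurable selection theorem (or construct $\lambda_P$ by an explicit scheme that is visibly measurable in $P$); the paper's formulation sidesteps this entirely, which is precisely one of its advantages.  Second, your covering count ``$(r/\de)^{\text{transverse dim}}$ balls of radius $\de$'' for $P'\cap P_\de$ is not correct as stated.  The paper's Lemma~\ref{code} gives, after projecting to $C\subset\R^k$, a codimension-one slab of thickness $\sim \de/r$ and unit extent in the remaining $k-1$ directions; covering this by $\de$-balls requires roughly $\de^{-(k-1)}\max(1,1/r)$ balls, so $\lambda_{P'}(P'\cap P_\de)\lkb \de^{\al-k+1}/r$.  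Plugging this into the dyadic sum does give $J(P)\lkb\de^{\al-k+s}\log(1/\de)$; your formula happens to produce a sufficient bound by a different computation, but the intermediate claim as written is wrong, and it is exactly the crux of the geometry, so it deserves care.  With these two points repaired the argument goes through and reproduces Lemma~\ref{Ade} with the same constants, up to the quality of the measurable selection.
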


\begin{remark}
Note that the integer $\rc$, the sets $\ti{E}$, $\ti{P}$ for every $P \in \ti{E}$, 
and $F$ depend on the cover $B \su \bigcup_{i=1}^{\infty} B(x_i,r_i)$.
\end{remark}
First we show how the proof can be finished using Lemma \ref{Ade}. We prove Lemma \ref{Ade} in Sections \ref{codd} and \ref{ll2}. 

\begin{remark}
As it happens often, it would be easier to prove the lower bound for the box dimension of $B$. 
For that purpose, we would not need the previous steps, it would be enough to estimate $\leb^n(B_{\de})$ from below. 
To prove the lower bound for the Hausdorff dimension, we sorted out a big enough part of $B$ that can be covered by balls of approximately the same radius. 
\end{remark}

Recall that $B \su \bigcup_{i=1}^{\infty} B(x_i,r_i)$, $2 r_i \leq 2^{-M} \leq \frac{\de_0}{2}$ for all $i$. 
We will use that the balls with indices from $J_{\rc}$ have approximately the same radius. We have that 
$$\sum_{i=1}^{\infty} (2 r_i)^u = \sum_{l=M}^{\infty} \sum_{i \in J_l} (2 r_i)^u \geq \sum_{i \in J_{\rc}} (2 r_i)^u \gkb 
\sum_{i \in J_{\rc }} (2^{-\rc})^u.$$
Let $\de=2^{-\rc+1}$. Then $\de \leq 2^{-M+1} \le \de_0$.
 
The set $F$ was constructed to satisfy 
$$F \su B_{\rc} \su \bigcup_{i \in J_{\rc}} B(x_i,r_i) \su \bigcup_{i \in J_{\rc}} B(x_i, \de),$$
and thus
$$F_{\de} \su \bigcup_{i \in J_{\rc}} B(x_i, 2 \de).$$

Using 
$$(2^{-\rc})^u \gkb \de^u = \frac{\de^n}{\de^{n-(2\al-k+s-\ga)}} \gkb \frac{\leb^n(B(x_i,2\de))}{\de^{n-(2\al-k+s-\ga)}}$$ 
and Lemma \ref{Ade}, we get 
\begin{align*}
\sum_{i \in J_{\rc}} (2^{-\rc})^u 
& \gkb \sum_{i \in J_{\rc}} 
 \frac{\leb^n(B(x_i,2\de))}{\de^{n-(2\al-k+s-\ga)}} 
 \geq 
\frac{\leb^n(F_{\de})}{\de^{n-(2\al-k+s)+\ga}} \gkb \\
& \gkb \frac{\de^{n-(2\al-k+s)}}{\de^{n-(2\al-k+s)+\ga} \rc^8 \log \frac{1}{\de}}=
\frac{1}{\de^{\ga} \rc^8 \log \frac{1}{\de}} \gkb \frac{1}{2^{-\rc \ga} \rc^9}.
\end{align*}
Thus we obtain
$$\inf_{\substack{B \su \bigcup_{i=1}^{\infty} B(x_i,r_i)\\ \forall i\ 2 r_i \leq 2^{-M}}} 
\sum_{i=1}^{\infty} (2 r_i)^u \gkb
\inf_{\rc \ge 1} \frac{1}{2^{-\rc \ga} \rc^9} \gkb_{\ga} 1$$
proving that $\hau^u(B)>0$ and we are done. 

\section{Geometric arguments}
\label{codd}
Now we start proving Lemma \ref{Ade}. In this section we prove a purely geometric lemma using only the set $\ti{E} \su A(n,k)$. 
This part is independent of the set $B$ and the number $\al$. 

\begin{lemma}
\label{gengeo}
For any $P, P' \in \ti{E}$, 
\begin{equation}
\label{zz1}
\leb^n(P_{\de} \cap P'_{\de} \cap S) \lkb \frac{\de^{n-k+1}}{\rho(P,P')+\de}
\end{equation}
 for all $0 < \de \leq \de_0$, where $\rho$ denotes the metric on $A(n,k)$, and $\de_0$ is from \eqref{sub}.

\end{lemma}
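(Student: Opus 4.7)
The plan is to parametrize each $P\in E$ as an affine graph over $V$ and then reduce the tube intersection volume to a sublevel estimate for a single affine map $V\to V^\perp$.

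By Lemma~\ref{ci}(\ref{p1}), every $P\in E$ meets $H_i$ in a single point $(e_i,w_i^P)$ with $w_i^P\in Q\su V^\perp$, so $\pi_V|_P$ is an affine bijection $P\to V$ and $P=\{(v,F_P(v)):v\in V\}$ for a unique affine map $F_P\co V\to V^\perp$ with $F_P(e_i)=w_i^P$. Since the $w_i^P$ lie in the bounded set $Q$, the linear part of $F_P$ has operator norm $\lesssim 1$ uniformly in $P\in E$, and a short minimisation shows $\dist(x,P)\asymp|w-F_P(v)|$ for every $x=(v,w)\in V\times V^\perp$, with constants depending only on $n,k$. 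Writing $G:=F_{P'}-F_P$ (again affine) and integrating first in the $V^\perp$-variable, the intersection of two $V^\perp$-balls of radius $\lesssim\de$ has measure $\lesssim\de^{n-k}$ and is empty unless $|G(v)|\lesssim\de$, hence
\begin{equation*}
\leb^n(P_\de\cap P'_\de\cap S)\lesssim \de^{n-k}\,\leb^k\bigl(\{v\in C:|G(v)|\lesssim\de\}\bigr).
\end{equation*}

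The map $P\mapsto(w_0^P,\dots,w_k^P)$ is a uniformly bi-Lipschitz parametrization of $E$, so the natural metric satisfies $\rho(P,P')\asymp M:=\max_{0\le i\le k}|G(e_i)|$; by convexity of $|G|$ and $C=\mathrm{conv}\{e_0,\dots,e_k\}$ we also have $\sup_{v\in C}|G(v)|=M$. It therefore suffices to show
\begin{equation*}
\leb^k\bigl(\{v\in C:|G(v)|\le C'\de\}\bigr)\lesssim \frac{\de}{M+\de}.
\end{equation*}
Write $G(v)=g_0+Lv$ with $L$ the linear part. The case $M\le 2C'\de$ is immediate, so assume $M>2C'\de$. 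If $\|L\|\ge M/4$, pick a unit $u\in V$ with $|Lu|=\|L\|$: then $t\mapsto|G(v+tu)|^2$ is a quadratic in $t$ with leading coefficient $\|L\|^2$, so its $(C'\de)^2$-sublevel on any line parallel to $u$ has length $\le 2C'\de/\|L\|$, and Fubini over the bounded base gives $\lesssim\de/\|L\|\lesssim\de/M$. If instead $\|L\|<M/4$, then $g_i=g_0+L(e_i)$ with $|e_i|\le 1$ forces $|g_0|\ge M-\|L\|$, and for $v\in C$ we have $|v|\le 1$, so $|G(v)|\ge|g_0|-\|L\||v|\ge M-2\|L\|>M/2>C'\de$; the sublevel set is empty.

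The main point is the sublevel inequality, and within it the subtlety is the second case: when $\|L\|$ is small, $P$ and $P'$ are nearly parallel but separated by a translation distance $\asymp M$, and one has to exploit that their $\de$-tubes cannot meet on the bounded set $S$ once $M$ noticeably exceeds $\de$. The transverse regime $\|L\|\gtrsim M$ is then handled by the standard Fubini estimate in a direction of maximal $\|L\|$.
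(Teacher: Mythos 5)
Your proof is correct and follows essentially the same route as the paper: parametrize each $P\in\ti{E}$ as an affine graph $v\mapsto F_P(v)$ over $V$ (using Lemma~\ref{ci}(\ref{p1})), reduce the tube-intersection volume to a sublevel-set estimate for the affine difference $G=F_{P'}-F_P$, and then split into the regimes $M\lesssim\de$, $\|L\|\gtrsim M$, and $\|L\|\ll M$. These correspond exactly to the paper's trivial small-$\rho$ case and to parts (\ref{small}) and (\ref{empty}) of Lemma~\ref{code}, with the slab/Fubini bound in the transverse regime matching the paper's estimate of $\leb^k(N)$.
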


To prove Lemma \ref{gengeo}, we will define a new metric on $\ti{E}$ by making use of \eqref{p1} of Lemma \ref{ci}.
We will assign a code to each $k$-dimensional affine subspace in $\ti{E}$. 
For a given $P \in \ti{E}$, let $(0,a^0)=(0,\dots,0,a_1^0,\dots, a_{n-k}^0)$ denote  
the standard $\R^n$-coordinates of $P \cap H_0$. 
Similarly, let $(1^l,a^l)=(0,\dots,1,\dots,0, a_1^l,\dots, a_{n-k}^l)$ denote 
the standard $\R^n$-coordinates of $P \cap H_l$ 
for each $l=1,\dots,k$. 
Let $b^l=a^l-a^0 \in \R^{n-k}$ for each $l=1,\dots,k$. 
We refer to $a^0$ as the vertical intercept, and to $\{b^l\}_{l=1}^k$ as the slopes of $P$. 

We say that the point $x=x(P)=(a^0,b^1,\dots,b^k)=(a,b) \in \rr^{(k+1)(n-k)}$ is the \emph{code} 
of the $k$-dimensional affine subspace $P \in \ti{E}$. 
By \eqref{p1} of Lemma \ref{ci} one can see that $P \to x(P)$ is well defined and injective on $\ti{E}$.

We will use the maximum metric on the code space $\rr^{(k+1)(n-k)}$. 
This means, $\|x-x' \|=\max(\|a-a'\|, \|b-b'\|)$, where
$$\|a-a'\|=\max\limits_{j=1,\dots,n-k} |a_j^0-a_j^{0'}|,$$ 
\begin{equation}
\label{cb}
\|b-b'\|=\max\limits_{j=1,\dots,n-k} \left(\max\limits_{l=1,\dots,k} |b_j^l-b_j^{l'}|) \right). 
\end{equation}

\begin{remark}
\label{metric}
Put $d(P,P')=\|x(P)-x(P') \|$, then $d$ is a natural metric on $\ti{E}$. 
Thus the metrics $d$ and $\rho$ are strongly equivalent, this means, 
there exist positive constants $K_1$ and $K_2$ such that, for every $P,P' \in \ti{E}$,
$K_1 \cdot d(P,P')\leq \rho (P,P') \leq K_2 \cdot d(P,P').$
\end{remark}

%
In order to prove Lemma \ref{gengeo} we show that 
if $P$ and $P'$ are translated along $H_0$ far enough from each other  
compared to their slopes, then the intersection of their $\de$-tubes is empty in $S$, and 
if the slopes of $P$ and $P'$ are far enough from each other,  
then the intersection of their $\de$-tubes is small enough in $S$. 

\begin{lemma}
\label{code}
\begin{enumerate}[(a)]

\item
\label{empty}
There is a constant $D>0$ (depending only on $n$ and $k$) such that if $$\| a-a' \|  > \| b - b' \| + D \de $$
then $P_{\de} \cap P'_{\de} \cap S = \emptyset$ for all $0 < \de \leq \de_0$. 

\item
\label{small}
If $\|b-b'\| >0$, 
then $\leb^n(P_{\de} \cap P'_{\de} \cap S) \lkb \frac{\de^{n-k+1}}{\|b-b'\|}$ for all $0 < \de \leq \de_0$.

\end{enumerate}
\end{lemma}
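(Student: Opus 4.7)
The plan is to use the graph representation of Lemma~\ref{ci}(\ref{p1}). Writing each $P\in\ti E$ as $\{(t,a^0+Bt):t\in V\}$, where $B$ is the $(n-k)\times k$ matrix with columns $b^1,\dots,b^k$ (and similarly $P'=\{(t,a^{0'}+B't)\}$), the fact that every $P\cap H_i\in S$ is bounded gives a uniform bound on the entries of $B$, hence on the operator norm $\|B\|_{\mathrm{op}}$, for every $P\in\ti E$. This uniform slope bound is the workhorse of the argument; the recurrent technical nuisance will be translating between the coordinatewise code norm $\|\cdot\|$ and the Euclidean distances underlying the $\de$-neighborhoods.

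For part (\ref{empty}), suppose $y\in P_\de\cap P'_\de\cap S$ and pick $p\in P$, $p'\in P'$ with $|y-p|,|y-p'|\le\de$. Parametrize $p=(t_p,a^0+Bt_p)$ and $p'=(t_{p'},a^{0'}+B't_{p'})$. The orthogonal splitting of the Euclidean bound $|p-p'|\le 2\de$ yields $|t_p-t_{p'}|\le 2\de$ in $V$ and, via the identity
$$(a^0-a^{0'})+Bt_p-B't_{p'}=\Delta a+(B-B')t_p+B'(t_p-t_{p'}),$$
also $\|\Delta a+(B-B')t_p\|\le 2\de+2\|B'\|_{\mathrm{op}}\de\le C_1\de$ in $H_0$. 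Since $y\in S$, one has $t_p\in C_\de$; writing $t_p=c+s$ with $c\in C$ and $|s|\le\de$, the inequalities $c_l\ge 0$, $\sum_l c_l\le 1$ give $\|(B-B')c\|\le\|b-b'\|$, while a one-line Cauchy--Schwarz estimate gives $\|(B-B')s\|\le\sqrt k\,\|b-b'\|\de$. Combining and absorbing the uniform bound $\|b-b'\|\le M$ yields $\|a-a'\|\le\|b-b'\|+D\de$, which is the contrapositive of (\ref{empty}).

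For part (\ref{small}), I use Fubini in the $V$-direction. The uniform slope bound gives, for every $t\in V$,
$$P_\de\cap(\{t\}\times H_0)\su\bigl\{(t,y_H):|y_H-a^0-Bt|\le C'\de\bigr\},$$
an $(n-k)$-ball of measure $\lesssim\de^{n-k}$, and likewise for $P'_\de$. A point lying in both slices forces $\|\Delta a+(B-B')t\|\le 2C'\de$. Choosing $(i_0,j_0)$ with $|\Delta B_{i_0,j_0}|=\|b-b'\|$, the $i_0$-th scalar inequality has a coefficient vector in $\R^k$ whose Euclidean norm is at least $\|b-b'\|$, so the admissible $t$'s form a slab in $V$ of thickness $\lesssim\de/\|b-b'\|$. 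Intersecting this slab with the bounded convex set $C$ yields $\leb^k$-measure $\lesssim\de/\|b-b'\|$, and multiplying by the per-slice measure $\lesssim\de^{n-k}$ produces the desired bound $\leb^n(P_\de\cap P'_\de\cap S)\lesssim\de^{n-k+1}/\|b-b'\|$. The main obstacle throughout is only the careful bookkeeping of the constants arising from the code-norm/Euclidean conversion; the uniform slope bound ensures that all such constants depend only on $n$ and $k$.
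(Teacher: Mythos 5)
Your proof is correct and follows essentially the same route as the paper: the graph representation $P=\{(t,a^0+Bt)\}$, the uniform slope bound from Lemma~\ref{ci}(I), Fubini over the $V$-slices with a per-slice estimate $\lesssim\de^{n-k}$, and a slab of thickness $\lesssim\de/\|b-b'\|$ for the admissible $t$'s. The only (cosmetic) difference is that the paper first records the ``vertical thickening'' inclusion \eqref{tc} and then works slice-wise in both parts, whereas your part (a) argues point-by-point with triangle inequalities; the underlying content is identical.
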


\begin{proof}
Fix $P, P' \in \ti{E}$, and 
put $f,g: \rr^k \to \rr^{n-k}$, 
$$ t=(t_1,\dots,t_k) \mapsto a^0+t_1b^1+\dots+t_kb^k=f(t), $$
$$t=(t_1,\dots,t_k) \mapsto a^{0'}+t_1b^{1'}+\dots+t_kb^{k'}=g(t),$$ where 
$a^0,b^1,\dots b^k$, and $a^{0'},b^{1'},\dots b^{k'}$ are the code coordinates of $P$ and $P'$, respectively. 
Then
$$P \cap S = \{(t,f(t)) \in \rr^n \colon t \in C \}, \ P' \cap S = \{(t,g(t)) \in \rr^n \colon t \in C \}.$$ 

One can easily prove using \eqref{p1} of Lemma \ref{ci} and the compactness of $S$, that 
there is a constant $c>0$ independent of $\de$ such that for all $Q \in \ti{E}$, 
\begin{equation}
\label{tc}
Q_\de \su Q+ \left(\{0\} \times (-c\de,c\de)^{n-k} \right),
\end{equation}
where for $A,B \su \rr^n$, $A+B=\{a+b \colon a \in A, b \in B \}$. Fix such a constant $c$. 


Applying \eqref{tc} for $P$ and $P'$, we have
$$P_{\de} \su \{(t,u) \in \R^n \colon |f(t)-u| < c\de \}, 
P'_{\de} \su \{(t,u) \in \R^n \colon |g(t)-u| < c\de \},$$
and
\begin{equation}
\label{ball}
P_{\de} \cap P'_{\de} \cap S  \su \{(t,u) \in \R^n \colon u \in (B(f(t),c\de) \cap B(g(t),c\de)), t \in C \}.
\end{equation} 
Clearly, $|f(t)-g(t)|>2 c \de$ implies $B(f(t),c\de) \cap B(g(t),c\de)=\emptyset$, thus  $P_{\de} \cap P'_{\de} \cap S =\emptyset$. 
Put $D=2 c$, then $\| a-a' \|  > \| b - b' \| + D \de $ implies $|f(t)-g(t)|>2 c \de$, thus we are done with the proof of \eqref{empty} of Lemma \ref{code}.

By \eqref{ball} and Fubini's theorem we also have 
$$\leb^n(P_{\de} \cap P'_{\de} \cap S) \leq \int_C \leb^{n-k}(B(f(t),c\de) \cap B(g(t),c\de)) d\leb^k(t).$$
If $B(f(t),c\de) \cap B(g(t),c\de) \neq \emptyset$, we will use the trivial estimate 
$$\leb^{n-k}(B(f(t),c\de) \cap B(g(t),c\de)) \lkb_{n,k} (c\de)^{n-k} \lkb \de^{n-k}.$$ 
Put $N=\{t \in C \colon |f(t)-g(t)| \leq 2 c \de \}$, then 
$$\leb^n(P_{\de} \cap P'_{\de} \cap S) \lkb \int\limits_{N} \de^{n-k} d\leb^k(t).$$ 
%
Clearly, we have
\begin{equation}
\label{ph}
N \su \bigcap_{j=1}^{n-k} \left\{t \in C \colon 
\left|\left(a_j^0-a_j^{0'} \right) + \sum_{i=1}^k t_i \left(b_j^i-b_j^{i'} \right) \right| \leq 2c\de \right\}.
\end{equation}

By the definition of $\| \cdot \|$, there are indices $i,j$ such that $0 <\|b-b'\| =|b_j^i-b_j^{i'}|$. 
Fix such an $i$ and $j$, we can assume that $i=k$ without loss of generality. 
Then we get using \eqref{ph} that 
$$N \su \left\{t \in C \colon p_-(t) \leq t_k \leq p_+(t) \right\},$$
where 
$$p_-(t)=p_-(t_1,\dots,t_{k-1})=\frac{-2c\de-(a^0-a^{0'})-\sum\limits_{i=1}^{k-1} t_i (b_j^i-b_j^{i'})}{b_j^k-b_j^{k'}},$$ 
$$p_+(t)=p_+(t_1,\dots,t_{k-1})=\frac{2c\de-(a^0-a^{0'})-\sum\limits_{i=1}^{k-1} t_i (b_j^i-b_j^{i'})}{b_j^k-b_j^{k'}}.$$ 

The set $\left\{t \in C \colon p_-(t) \leq t_k \leq p_+(t) \right\}$ is obtained as the intersection of the simplex $C$ and 
the strip between the parallel hyperplanes $\{t_k=p_+(t)\}, \{t_k=p_-(t)\}$. 
One can easily calculate the distance of these hyperplanes, using the normal vector $n=(b_j^1-b_j^{1'},\dots,b_j^k-b_j^{k'})$. 
One gets  
$$d=d(\{t_k=p_+(t)\}, \{t_k=p_-(t)\})=\frac{2c\de}{\sqrt{\sum\limits_{i=1}^{k} (b_j^i-b_j^{i'})^2}}.$$
Thus the set $N$ is contained in a rectangular box, where the shortest side length is $d$ and the others are $\diam(C)=\sqrt{2}$. Then
$$\leb^k(N) \lkb \frac{\de}{\sqrt{\sum\limits_{i=1}^{k} (b_j^i-b_j^{i'})^2}} \lkb 
\frac{\de}{|b_j^k-b_j^{k'}|} = \frac{\de}{\|b-b'\|},$$
thus 
$$\leb^n(P_{\de} \cap P'_{\de} \cap S) \lkb \frac{\de^{n-k+1}}{\|b-b'\|}$$ 
 and we are done with the proof of Lemma \ref{code}. 
\end{proof}

Now we prove Lemma \ref{gengeo}. 
\begin{proof}
Using \eqref{empty} of Lemma \ref{code} we obtain that $P_{\de} \cap P'_{\de} \cap S= \emptyset$ for all $0 < \de \leq \de_0$
if $\| a-a' \|  > \| b - b' \| + D \de$, so \eqref{zz1} is clearly satisfied. 

Assume now that $\| a-a' \|  \leq \| b - b' \| + D \de$, and $\| b - b' \| \leq \de$. By Remark \ref{metric}, 
$$\rho (P,P') \leq 
K_2 \|x-x' \| \leq K_2 (D+1) \de \lkb \de,$$ 
and then since $S$ is bounded, we have
$$\leb^n(P_{\de} \cap P'_{\de} \cap S) \leq \leb^n(P_{\de} \cap S) \lkb \de^{n-k}=\frac{\de^{n-k+1} }{\de} \lkb \frac{\de^{n-k+1} }{\rho(P,P') + \de}.$$ 
Thus we are done in this case. 

If  $\| a-a' \|  \leq \| b - b' \| + D \de$ and $\|b-b'\| \geq \de$, we have that 
$$\rho (P,P') + \de \leq 
K_2 \|x-x' \| + \de \leq K_2 (\| b - b' \| + D \de)  + \de \lkb \|b-b'\|$$
using Remark \ref{metric} again.
Applying \eqref{small} of Lemma \ref{code}, we obtain that 
$$\leb^n(P_{\de} \cap P'_{\de} \cap S) \lkb \frac{\de^{n-k+1} }{\|b-b'\|} \lkb \frac{\de^{n-k+1} }{\rho(P,P') + \de},$$ 
which is \eqref{zz1}. 

\end{proof}

\section{The proof of Lemma \ref{Ade}, $\mathit{L}^2$ argument}
\label{ll2}

In this section we prove Lemma \ref{Ade} with help on an $\mathit{L}^2$ estimation technique. 
It resembles the technique that C\'ordoba used in his proof for the Kakeya maximal inequality in the plane, see \cite{Co}.  

By Fubini's theorem we have the following:
\begin{align*}
\int\limits_{\ti{E}} \leb^n (\ti{P}_{\de}) d\mu(P) & =\int\limits_{\ti{E}} \int\limits_{\rr^n} \chi_{\ti{P}_{\de}}(y) dy d\mu(P) \\
& = \int\limits_{\rr^n} \int\limits_{\ti{E}} \chi_{\ti{P}_{\de}}(y) d\mu(P) dy = \int\limits_{\rr^n}  \chi_{F_{\de}}(y) \cdot 
\int\limits_{\ti{E}} \chi_{\ti{P}_{\de}}(y) d\mu(P) dy,
\end{align*}
 where $F=\bigcup_{P \in \ti{E}} \ti{P}$ from \eqref{Fdef}.
Now we apply the Cauchy-Schwarz inequality for the $\mathit{L}^2$ functions $y \mapsto \chi_{F_{\de}}(y)$ and 
$y \mapsto \int\limits_{\ti{E}} \chi_{\ti{P}_{\de}}(y) d\mu(P)$. We get 
\begin{align*}
\int\limits_{\rr^n}  \chi_{F_{\de}} (y) \cdot \left(\int\limits_{\ti{E}} \chi_{\ti{P}_{\de}}(y) d\mu(P) \right) dy
& \leq 
\left(\int\limits_{\rr^n} \chi_{F_{\de}}^2 (y) dy \right)^{1/2} \cdot  
\left(\int\limits_{\rr^n} \left( \int\limits_{\ti{E}} \chi_{\ti{P}_{\de}}(y) d\mu(P) \right)^2 dy \right) ^{1/2} \\
& =
\left(\leb^n(F_{\de}) \right)^{1/2} \cdot 
\left(\int\limits_{\rr^n} \iint\limits_{\ti{E} \times \ti{E}} \chi_{\ti{P}_{\de}}(y) \cdot \chi_{\ti{P'}_{\de}}(y) d\mu(P) d\mu(P') dy \right)^{1/2} \\
& =\left(\leb^n(F_{\de})\right)^{1/2} \cdot 
\left( \iint\limits_{\ti{E} \times \ti{E}} \leb^n \left(\ti{P}_{\de} \cap \ti{P'}_{\de} \right) d\mu(P) d\mu(P')\right)^{1/2}.
\end{align*}

We proved that 
$$
\int\limits_{\ti{E}} \leb^n (\ti{P}_{\de}) d\mu(P)  \leq 
\left(\leb^n(F_{\de})\right)^{1/2} \cdot 
\left( \iint\limits_{\ti{E} \times \ti{E}} \leb^n \left(\ti{P}_{\de} \cap \ti{P'}_{\de} \right) d\mu(P)
 d\mu(P')\right)^{1/2}.
$$
On the other hand, there is a lower bound for the left hand side. 

For any $U \su \rr^n$ and $\ep>0$, let $N(U,\ep)$ denote the smallest number of $\ep$-balls needed to cover $U$. 
It is well known (see e.g. \cite{Ma}) that $\leb^n(U_{\ep}) \gkb_n N(U,2\ep) \ep^n$ for every $U \su \rr^n$ and $\ep>0$. 
Since $\hau^{\al}_{\infty}(\ti{P})\geq \frac{1}{\rc^2}$ by \eqref{est}, 
we have $N(\ti{P},\ep) \cdot (2\ep)^{\al} \geq \frac{1}{\rc^2}$ for every $P \in \ti{E}$ and $\ep>0$, thus 
\begin{equation}
\label{mink}
\leb^n(\ti{P}_{\de}) \gkb N(\ti{P},2\de) \cdot \de^n \gkb \de^{n-\al} \cdot \frac{1}{\rc^2}.
\end{equation}
Then 
$$\int\limits_{\ti{E}} \leb^n (\ti{P}_{\de}) d\mu(P) \gkb \de^{n-\al} \cdot \frac{1}{\rc^2} \cdot\mu(\ti{E}) \geq 
\de^{n-\al} \cd  \frac{1}{\rc^4}$$
by \eqref{est}. 

Thus we get
\begin{equation}
\label{l2}
\frac{\de^{2n-2\al}}{\rc^8} \lkb
\leb^n(F_{\de})
\cdot 
\iint\limits_{\ti{E} \times \ti{E}} \leb^n \left(\ti{P}_{\de} \cap \ti{P'}_{\de} \right) d\mu(P) d\mu(P').
\end{equation}

Thus we need to find an upper estimate for 
\begin{equation}
\label{double}
 \iint\limits_{\ti{E} \times \ti{E}} \leb^n (\ti{P}_{\de} \cap \ti{P'}_{\de}) d\mu(P) d\mu(P').
\end{equation}
This means, we have to investigate, how the different $\de$-tubes intersect each other. 

We will estimate the integral \eqref{double}
by dividing the set $\ti{E}$ into parts. 
One can easily check using Remark \ref{meas} that the elements of this partition will be measurable. 

Fix a $P' \in \ti{E}$. 
Put 
$$E_0=\{P \in \ti{E} \colon \rho(P,P') \leq \de \}$$ and 
$$E_j=\{P \in \ti{E} \colon 2^{j-1} \de < \rho(P,P') \leq 2^{j} \de \}$$ 
for $j=1,\dots, N$, where $N \lkb \log \frac{1}{\de}$. 
Clearly, we have $\ti{E}=\bigcup_{j=0}^{N} E_j$, so

\begin{equation}
\label{integral}
\int\limits_{\ti{E}} \leb^n (\ti{P}_{\de} \cap \ti{P'}_{\de}) d\mu(P)=
\int\limits_{E_0} \leb^n (\ti{P}_{\de} \cap \ti{P'}_{\de}) d\mu(P)+
\sum_{j=1}^{ N } \int\limits_{E_j} \leb^n (\ti{P}_{\de} \cap \ti{P'}_{\de}) d\mu(P).
\end{equation}

By \eqref{bo2} and Lemma \ref{gengeo}, we obtain
\begin{equation}
\label{zz2}
\int\limits_{E_j} \leb^n (\ti{P}_{\de} \cap \ti{P'}_{\de}) d\mu(P) \leq 
\int\limits_{E_j} \leb^n (P_{\de} \cap P'_{\de} \cap S) d\mu(P) \lkb \int\limits_{E_j} \frac{\de^{n-k+1}}{\rho(P,P')+\de} d\mu(P)
\end{equation}
for each $j=0,\dots,N$. 

In the case $j=0$, we obtain that 
\begin{equation}
\label{zz3}
\int\limits_{E_0} \leb^n (\ti{P}_{\de} \cap \ti{P'}_{\de}) d\mu(P) \lkb \frac{\de^{n-k+1}}{\de} \mu(E_0) \lkb \de^{n-k} \cdot \de^s
\end{equation}
by \eqref{zz2} and \eqref{Frost}. 

For $j \in \{1,\dots,N\}$, we get 
\begin{equation}
\label{zz4}
\int\limits_{E_j} \leb^n (\ti{P}_{\de} \cap \ti{P'}_{\de}) d\mu(P) \leq \frac{\de^{n-k+1}}{2^{j-1}\de +\de} \mu(E_j) \lkb \frac{\de^{n-k}}{2^j} (2^j\de)^s
= \frac{\de^{n-k+s} \cdot 2^{js}}{2^j} 
\end{equation}
by \eqref{zz2} and \eqref{Frost} again. 
Applying these estimates for \eqref{integral} and using $s \leq 1$, we get
$$\int\limits_{\ti{E}} \leb^n (\ti{P}_{\de} \cap \ti{P'}_{\de}) d\mu(P) \lkb \de^{n-k+s} \left(1+ \sum_{j=1}^{ N } \frac{2^{js}}{2^j} \right) 
\lkb \de^{n-k+s} N \lkb \de^{n-k+s} \log \frac{1}{\de}.$$

Finally we integrate with respect to $P'$ and obtain by $\mu(\ti{E}) \leq 1$ that 
$$\iint\limits_{\ti{E} \times \ti{E}} \leb^n (\ti{P}_{\de} \cap \ti{P'}_{\de}) d\mu(P) d\mu(P') \lkb \de^{n-k+s} \log \frac{1}{\de}.$$ 

Recalling \eqref{l2}, we obtain that 
$$\frac{\de^{2n-2\al}}{\rc^8} \lkb
\leb^n(F_{\de})
\cdot 
\de^{n-k+s} \log \frac{1}{\de},$$ 
 thus
$$\leb^n(F_{\de}) \gkb 
\frac{\de^{n-2\al+k-s}}{\rc^8 \log \frac{1}{\de}}$$ 
and we are done with the proof of Lemma \ref{Ade} and so also with the proof of Theorem \ref{thm1}. 

\section*{Acknowledgement}
The first author is grateful to Izabella {\L}aba for her notes on 
intersections of Cantor sets and differentiation for self-similar measures. 
It helped her choosing the formulation of the $\mathit{L}^2$ argument used in Section \ref{ll2}. 
The authors are grateful to the referees for their helpful suggestions.


\begin{thebibliography}{10}

\bibitem{CCHK}
A. Chang, M. Cs\"ornyei, K. H\'era and T. Keleti,
\newblock Small unions of affine subspaces and skeletons via Baire category, 
\newblock \textit{Adv. Math.} \textbf{328} (2018), 801--821.

\bibitem{Co}
A. C\'ordoba, 
\newblock Kakeya maximal function and spherical summation of multipliers, 
\newblock \textit{Amer. J. Math.} \textbf{99} (1977), 1--22.

\bibitem{Da}
R. O. Davies, Increasing Sequences of Sets and Hausdorff Measure,
\textit{Proc. London Math. Soc. (3)} \textbf{20} (1970), 222--236.

\bibitem{De}
C. Dellacherie, 
\newblock \textit{Ensembles Analytiques, Capacit\'es, Mesures de Hausdorff},
\newblock Lecture Notes in Mathematics 295, Springer-Verlag, 1972. 

\bibitem{FaMa}
K. Falconer and P. Mattila, 
\newblock Strong Marstrand theorems and dimensions of sets formed by subsets of hyperplanes, 
\newblock  \textit{J. Fractal Geom.} \textbf{3} (2016), 319--329.

\bibitem{Fr}
D. H. Fremlin, 
\newblock \textit{Measure Theory: Topological Measure Spaces (Vol. 4)}, 
Torres Fremlin, 2003.

\bibitem{Ho}
J. D. Howroyd,
\newblock On dimension and on the existence of sets of finite positive Hausdorff measure,
\newblock \textit{Proc. London Math. Soc. (3)} \textbf{70} (1995), 581--604.

\bibitem{Kec}
A. S. Kechris, 
\newblock \textit{Classical Descriptive Set Theory}, 
\newblock Graduate Texts in Mathematics 156, Springer-Verlag, 1994.

\bibitem{Ke}
T.~Keleti, 
\newblock Are lines much bigger than line segments?, 
\newblock \textit{Proc. Amer. Math. Soc.} \textbf{144} (2016), 1535--1541. 

\bibitem{KNS}
T.~Keleti, D.~T.~Nagy and P.~Shmerkin,
\newblock Squares and their centers, 
\newblock to appear in \textit{J. Anal. Math.}

\bibitem{Ma}
P.~Mattila, 
\newblock \textit{Geometry of sets and measures in Euclidean spaces}, 
\newblock Cambridge University Press, 1995.

\bibitem{Ma15}
P.~Mattila, 
\newblock \textit{Fourier Analysis and Hausdorff Dimension},
\newblock Cambridge University Press, 2015.

\bibitem{MR}
U.~Molter and E.~Rela,
\newblock Furstenberg sets for a fractal set of directions,
\newblock \textit{Proc. Amer. Math. Soc.} \textbf{140} (2012), 2753--2765. 


\bibitem{Ob}
D. M.~Oberlin, 
\newblock Unions of hyperplanes, unions of spheres, and some related estimates,
\newblock  Illinois J. Math. \textbf{51} (2007), no. 4, 1265--1274.

\bibitem{Ob2}
D. M.~Oberlin, 
\newblock Exceptional sets of projections, unions of $k$-planes, and associated transforms, 
\newblock Israel J. Math. \textbf{202} (2014), 331--342.

\bibitem{Th}
R.~Thornton,
\newblock Cubes and Their Centers, \textit{Acta Math. Hungar.} \textbf{152} (2017), 291--313.

\bibitem{Wo}
T. Wolff, 
\newblock An improved bound for Kakeya type maximal functions,
\newblock \textit{Revista Math. Iberoamericana} \textbf{11} (1995), 651--674.

\bibitem{Wo97}
T.~Wolff,
\newblock A Kakeya-type problem for circles,
\newblock \emph{Amer. J. Math.} \textbf{119} (1997), 985--1026. 

\bibitem{Wo00}
T.~Wolff,
\newblock Local smoothing type estimates on {$L^p$} for large {$p$},
\newblock {\em Geom. Funct. Anal.} \textbf{10} (2000), 1237--1288.

\end{thebibliography}
\end{document}